\newcommand{\cal}[1]{\mathcal{#1}}
\numberwithin{theorem}{section}
\let\@@pmod\pmod
\DeclareRobustCommand{\pmod}{\@ifstar\@pmods\@@pmod}
\def\@pmods#1{\mkern4mu({\operator@font mod}\mkern 6mu#1)}
\newcommand{\NP}{\oper{NP}}
\newcommand{\BS}{\oper{BA}}
\newcommand{\ve}{\varepsilon}
\newcommand\redsout{\bgroup\markoverwith{\textcolor{red}{\rule[0.5ex]{2pt}{1pt}}}\ULon}
\newcommand\bluesout{\bgroup\markoverwith{\textcolor{blue}{\rule[0.5ex]{2pt}{1pt}}}\ULon}
\newcommand\greensout{\bgroup\markoverwith{\textcolor{green}{\rule[0.5ex]{2pt}{1pt}}}\ULon}
\date{\today}
\address{John Bergdall\\Department of Mathematics and Statistics \\ Boston University \\ 111 Cummington Mall \\ Boston, MA 02215\\USA}
\email{bergdall@math.bu.edu}
\urladdr{http://math.bu.edu/people/bergdall}
\address{Robert Pollack\\Department of Mathematics and Statistics \\ Boston University \\ 111 Cummington Mall \\ Boston, MA 02215\\USA}
\email{rpollack@math.bu.edu}
\urladdr{http://math.bu.edu/people/rpollack}
\subjclass[2000]{11F33 (11F85)}
\title{Slopes of modular forms and the ghost conjecture}
\author{John Bergdall and Robert Pollack}
\begin{document}
\begin{abstract}
We formulate a conjecture on slopes of overconvergent $p$-adic cusp forms of any $p$-adic weight in the $\Gamma_0(N)$-regular case. This conjecture unifies a conjecture of Buzzard on classical slopes and more recent conjectures on slopes ``at the boundary of weight space''.
\end{abstract}

\selectlanguage{english}

\maketitle

\section{Introduction}

Let $p$ be a prime number, and let $N$ be a positive integer co-prime to $p$. The goal of this article is to investigate $U_p$-slopes:\ the $p$-adic valuations of the eigenvalues of the $U_p$-operator acting on spaces of (overconvergent $p$-adic) cuspforms of level $\Gamma_0(Np)$. Ultimately, we formulate a conjecture which unifies currently disparate predictions for the behavior of slopes at weights ``in the center'' and ``towards the boundary'' of $p$-adic weight space.

The general study of slopes of cuspforms began with extensive computer calculations of Gouv\^ea and Mazur in the 1990's \cite{GouveaMazur-FamiliesEigenforms}. Theoretical advancements of Coleman \cite{Coleman-pAdicBanachSpaces} led to a general theory of overconvergent $p$-adic cuspforms and eventually, with Mazur, to the construction of so-called eigencurves \cite{ColemanMazur-Eigencurve}. To better understand the geometry of these eigencurves, Buzzard and his co-authors returned to explicit investigations on slopes in a series of papers \cite{Buzzard-SlopeQuestions,BuzzardCalegari-2adicSlopes,BuzzardCalegari-GouveaMazur,BuzzardKilford-2adc}. 

In \cite{Buzzard-SlopeQuestions}, Buzzard produced a combinatorial algorithm (``Buzzard's algorithm'') that for fixed $p$ and $N$ takes as input $k$ and outputs $\dim S_k(\Gamma_0(N))$-many integers.  He also defined the notion of a prime $p$ being $\Gamma_0(N)$-regular and conjectured that his algorithm was computing (classical $T_p$-)slopes in the regular cases.\footnote{Buzzard's algorithm only outputs integers, so Buzzard's conjecture implies that $T_p$-slopes are always integral in $\Gamma_0(N)$-regular cases. The authors have separately observed that $\Gamma_0(N)$-regularity is a necessary condition for the $T_p$-slopes to always be integral (\cite{BergdallPollack-FractionalSlopes}).} 
\begin{definition}\label{definition:Gamma0(N)-regular}
An odd prime $p$ is $\Gamma_0(N)$-regular if the Hecke operator $T_p$ acts on $S_k(\Gamma_0(N))$ with $p$-adic unit eigenvalues for $2 \leq k \leq p+1$.
\end{definition}
See Definition \ref{defn:2regular} for $p=2$, but we note now that $p=2$ is $\SL_2(\Z)$-regular. The first prime $p$ which is not $\SL_2(\Z)$-regular is $p=59$.

Buzzard's algorithm is concerned with spaces of cuspforms without character, where the slopes vary in a fairly complicated way with respect to the weight.  By contrast, a theorem of Buzzard and Kilford  \cite{BuzzardKilford-2adc} gives a very simple description of the $U_2$-slopes in $S_k(\Gamma_1(2^j),\chi)$ where $j \geq 3$ and $\chi$ is a primitive Dirichlet character of conductor $2^j$:\ the $i$-th slope is simply $i \cdot 2^{3-j}$. See also analogous theorems of Roe \cite{Roe-Slopes}, Kilford \cite{Kilford-5Slopes} and Kilford--McMurdy \cite{KilfordMcMurday-7adicslopes}.

In \cite{LiuXiaoWan-IntegralEigencurves}, Liu, Wan and Xiao gave  a conjectural, but general, framework in which to view the Buzzard--Kilford calculation (see \cite{WanXiaoZhang-Slopes} also). Namely, those authors have conjectured that the slopes of the $U_p$-operator acting on spaces of overconvergent $p$-adic cuspforms at $p$-adic weights ``near the boundary of weight space'' are finite unions of arithmetic progressions whose initial terms are the slopes in explicit classical weight two spaces. They also proved their conjecture for overconvergent forms on definite quaternion algebras.

The beautiful description of the slopes at the boundary of weight space is actually a consequence (\cite{BergdallPollack-FredholmSlopes,{LiuXiaoWan-IntegralEigencurves}}) of a conjecture, widely attributed to Coleman, called ``the spectral halo'':\ upon deleting a closed subdisc of weight space, the Coleman--Mazur eigencurve becomes an infinite disjoint union of finite flat covers over the remaining portion of weight space. Families of eigenforms over outer annuli of weight space should be interpreted as $p$-adic families passing through overconvergent $p$-adic eigenforms in characteristic $p$ (\cite{AndreattaIovitaPilloni-Halo, JohanssonNewton-Extended}). The existence of a spectral halo should not depend on regularity.

In summary, for a space either of the form $S_k(\Gamma_0(Np))$ or $S_k(\Gamma_1(Np^r),\chi)$ with $\chi$ having conductor $p^r$, the slopes are conjectured to be determined by a finite computation in small weights together with an algorithm:\ Buzzard's algorithm (when $p$ is $\Gamma_0(N)$-regular) in the first case and ``generate an arithmetic progression'' in the second. 

In this article, we present a unifying conjecture, which we call the ghost conjecture, that predicts the slopes of overconvergent $p$-adic cuspforms over all of $p$-adic weight space simultaneously. The shape of our conjecture is the following:\ we write down an entire series whose coefficients are functions on weight space (depending explicitly on $p$ and $N$).
We then conjecture, in the $\Gamma_0(N)$-regular case, that the Newton polygon of the specialization of our series to any given weight has the same set of slopes as the $U_p$-operator acting on the corresponding space of overconvergent $p$-adic cuspforms.

\subsection*{Organiziation}
The second section of the paper contains the statement of the conjecture when $p$ is odd, or $p=2$ and $N=1$. The third section of the paper accounts for the evidence we have compiled for our conjecture. In the fourth section, we discuss the relationship between our conjecture and the spectral halo. We include, as well, the further discovery of ``halos'' closer to the center of weight space. In the final section we discuss the case $p=2$ and $N>1$ and an irregular case.

\subsection*{Acknowledgements}
We thank Kevin Buzzard and Liang Xiao for helpful discussions. The first author was supported by NSF grant DMS-1402005 and the second author was supported by NSF grant DMS-1303302.

\section{Statement of conjecture}\label{sec:statement}

We begin with overconvergent $p$-adic cuspforms. Write $\cal  W$ for the {\em even} $p$-adic weight space:\ the space of continuous characters $\kappa: \Z_p^\times \rightarrow \C_p^\times$ with $\kappa(-1)=1$. For each $\kappa \in \cal W$ we write $S_\kappa^{\dagger}(\Gamma_0(Np))$ for the space of weight $\kappa$ overconvergent $p$-adic cuspforms of level $\Gamma_0(Np)$ (\cite{Coleman-pAdicBanachSpaces}).  The space $S_\kappa^{\dagger}(\Gamma_0(Np))$ is equipped with a (compact) operator $U_p$, and we remind the reader that the term slope refers to the $p$-adic valuation of an eigenvalue of this operator.

An integer $k$ gives rise to a $p$-adic weight $z \mapsto z^k$, and  the finite-dimensional space $S_k(\Gamma_0(Np))$ sits as a $U_p$-stable subspace of $S_k^{\dagger}(\Gamma_0(Np))$. A theorem of Coleman (\cite[Theorem 6.1]{Coleman-ClassicalandOverconvergent}) implies that classical slopes are exactly the lowest $\dim S_k(\Gamma_0(Np))$-many overconvergent slopes. Thus, one could determine the classical slopes by attempting the seemingly more difficult task of determining the overconvergent slopes.

We denote by
\begin{equation*}
P_\kappa(t) = \det\left(1 - t\restrict{U_p}{S_{\kappa}^{\dagger}(\Gamma_0(Np))}\right) = 1 + \sum_{i = 1}^\infty a_i(\kappa)t^i \in \C_p[[t]]
\end{equation*}
the Fredholm series for the $U_p$-operator in weight $\kappa$. The series $P_\kappa$ is entire in the variable $t$ and the $U_p$-slopes in weight $\kappa$ are the slopes of the Newton polygon $\NP(P_\kappa)$.  Here and below if $P=\sum_i a_i t^i$ is in $\C_p[[t]]$, $\NP(P)$ denotes the Newton polygon of $P$, i.e.\ the lower convex hull of the points $(i,v_p(a_i))$ where $v_p$ is the $p$-adic valuation normalized so that $v_p(p)=1$.

Coleman proved that  each $\kappa \mapsto a_i(\kappa)$ is defined by a power series with $\Z_p$-coefficients (see \cite[Appendix I]{Coleman-pAdicBanachSpaces}). To be precise, we write $\cal W = \bigunion_{\ve} \cal W_{\ve}$ where the (disjoint) union runs over even characters $\varepsilon:(\Z/2p\Z)^\times \rightarrow \C_p^\times$, and $\kappa \in \cal W$ is in $\cal W_{\ve}$ if and only if the restriction of $\kappa$ to the torsion subgroup in $\Z_p^\times$ is given by $\ve$. We fix a topological generator $\gamma$ for the pro-cyclic group $1+2p\Z_p$. Each $\cal W_{\varepsilon}$ is then an open $p$-adic unit disc with coordinate $w_{\kappa} = \kappa(\gamma)-1$. The meaning of Coleman's second result can now be clarified:\ for each $\varepsilon$ there exists a two-variable series
\begin{equation*}
P^{(\varepsilon)}(w,t) = 1 + \sum_{i=1}^\infty a_i^{(\varepsilon)}(w)t^i \in \Z_p[[w,t]]
\end{equation*}
such that if $\kappa \in \cal W_{\varepsilon}$ then $P_{\kappa}(t) = P^{(\varepsilon)}(w_{\kappa},t)$. In particular, the slopes of overconvergent $p$-adic cuspforms are encoded in the Newton polygons of the evaluations of the $P^{(\varepsilon)}$ at $p$-adic weights.

Our approach to predicting slopes is to create a faithful and explicit model $G^{(\varepsilon)}$ for each Fredholm series $P^{(\varepsilon)}$. We begin by writing $G^{(\varepsilon)}(w,t) = 1 + \sum g_i^{(\varepsilon)}(w)t^i$ with coefficients $g_i^{(\varepsilon)}(w)\in \Z_p[w]$ which we shortly determine. If decorations are not needed, we refer to $g(w)$ as one of these coefficients. Each coefficient will be non-zero and not divisible by $p$.\footnote{In \cite{BergdallPollack-FredholmSlopes}, the authors showed that if $N=1$ then the coefficients $a_i^{(\varepsilon)}(w)$ are not divisible by $p$. For $N >1$ this is not true, but we don't believe this divisibility plays a crucial role for predicting slopes.} In particular, $w_{\kappa} \mapsto v_p(g(w_{\kappa}))$ will depend only on the relative position of $w_{\kappa}$ to the finitely many roots of $g(w)$ in the open disc $v_p(w) > 0$.

To motivate our specification of the zeros of $g_i^{(\varepsilon)}(w)$, we make two observations:
\begin{enumerate}
\item If $g_i^{(\varepsilon)}(w_\kappa) = 0$ then the $i$-th and $(i+1)$-st slope of the Newton polygon of $G^{(\varepsilon)}(w_\kappa,t)$ are the same.
\end{enumerate}
Indeed, if $g_i^{(\varepsilon)}(w_\kappa) = 0$ then the $i$-th point of the Newton polygon in weight $\kappa$ is placed at infinity forcing a line segment of length at least  2 to appear.  So, one can ask:\ what are the slopes that appear with multiplicity greater than 1 in spaces of overconvergent $p$-adic cuspforms? The second observation is:
\begin{enumerate}
\setcounter{enumi}{1}
\item If $k\geq 2$ is an even integer then the slope ${k-2\over 2}$ is often repeated in $S_k^{\dagger}(\Gamma_0(Np))$.
\end{enumerate}
In fact,  any eigenform in $S_k(\Gamma_0(Np))$ which is new at $p$ has slope ${k-2\over 2}$. 

Combining observations (a) and (b), it might be reasonable to insist that $g_i^{(\varepsilon)}(w)$ has a zero exactly at $w = w_{k}$ with $k \in \cal W_{\varepsilon}$ where the $i$-th and $(i+1)$-st slope of $U_p$ acting on $S_k(\Gamma_0(Np))$ are both ${k-2\over 2}$. This leads us to seek $g_i^{(\ve)}$ such that:
\begin{multline}\label{eqn:mi_positive}
g_i^{(\varepsilon)}(w_{k}) = 0\\
\iff \dim S_k(\Gamma_0(N)) < i < \dim S_k(\Gamma_0(N)) + \dim S_k(\Gamma_0(Np))^{p-\new}
\end{multline}
for $k \in \cal W_{\ve}$. Such a $g_i^{(\varepsilon)}$ exists because for fixed $i$, the right-hand side of \eqref{eqn:mi_positive} holds for at most finitely many $k$. The na\"ive idea of only giving the coefficients $g_i^{(\varepsilon)}$ simple zeros would not result in an entire power series (see Lemma \ref{lemma:entirety}), so now we need to specify the multiplicities of the zeros $w_k$.

To this end, first note that an integer $k \in \cal W_{\varepsilon}$ is a zero for $g_i^{(\varepsilon)}(w)$ for some range of consecutive integers $i = a,a+1,\dots,b$ for which the right-hand side of \eqref{eqn:mi_positive} holds.  We set the order of vanishing of $g_a^{(\varepsilon)}(w)$ and $g_b^{(\varepsilon)}(w)$ at $w = w_{k}$  to be 1; for $g_{a+1}^{(\varepsilon)}(w)$ and $g_{b-1}^{(\varepsilon)}(w)$ to be 2; and so on.  More formally, define the sequence $s(\ell)$ by
\begin{equation*}
s_i(\ell) = \begin{cases}
i & \text{if $1\leq i \leq \floor{\ell / 2}$}\\
\ell+1 - i & \text{if $\floor{\ell / 2} < i \leq \ell$},
\end{cases}
\end{equation*}
and $s(\ell)$ is the empty sequence if $\ell\leq 0$. 
For $d \geq 0$ we write $s(\ell,d)$ for the infinite sequence
\begin{equation*}
s(\ell,d) = (\underlabel{d\text{ times}}{0,\dotsc,0},s_1(\ell),s_2(\ell),\dotsc,s_\ell(\ell),0,\dotsc).
\end{equation*}
For example, $s(5,3) = (0,0,0,1,2,3,2,1,0,0,0,\dots)$. If $k$ is an integer then set $d_k := \dim S_k(\Gamma_0(N))$ and $d_k^{\new} := \dim S_k(\Gamma_0(Np))^{p-\new}$. We then define multiplicities $(m_0(k), m_1(k), \dots) := s(d_k^{\new}-1,d_k)$, and set
\begin{equation*}
g_i^{(\varepsilon)}(w) := \prod_{k \in \cal W_{\varepsilon}} (w-w_{k})^{m_i(k)} \in \Z_p[w] \subset \Z_p[[w]]
\end{equation*}
which we note is a finite product.

\begin{definition}
The $p$-adic ghost series of tame level $\Gamma_0(N)$ on the component $\cal W_{\varepsilon}$ is
\begin{equation*}
G^{(\varepsilon)}(w,t) := 1 + \sum_{i=1}^\infty g_i^{(\varepsilon)}(w)t^i \in \Z_p[[w,t]].
\end{equation*}
For $\kappa \in \cal W_{\varepsilon} \subset \cal W$ set $G_\kappa := G^{(\varepsilon)}(w_\kappa,t)$.
\end{definition}
Before stating our conjecture (Conjecture \ref{conj:intro-ghost} below), we give an example and prove a simple result.
\begin{example}\label{example:explicit-p=2-info}
When $p=2$ and $N=1$, the ghost series begins
\begin{multline}\label{eqn:2-adic-ghost-two-terms}
G(w,t) = 1 + (w-w_{14})t + (w-w_{20})(w-w_{22})(w-w_{26})t^2 + \\
 (w-w_{26})(w-w_{28})(w-w_{30})(w-w_{32})(w-w_{34})(w-w_{38})t^3 +  \dotsb
 \end{multline}
The coefficient of $t^4$ is
 \begin{small}
 \begin{equation*}
 (w-w_{32}) (w-w_{34}) (w-w_{36})(w-w_{38})^2(w-w_{40})(w-w_{42})(w-w_{44})(w-w_{46})(w-w_{50}).
 \end{equation*}
 \end{small}
It is the first time a zero of multiplicity larger than one appears.
 
It is not hard to generalize the pattern above to see that 
\begin{equation}\label{eqn:gi-zeros}
g_i(w_k) = 0 \iff k \in \set{6i+8,\dotsc,12i-2} \union \set{12i+2}.
\end{equation}
The easiest way to understand the multiplicities of the zeros in \eqref{eqn:gi-zeros} is through the zeros and poles of $\Delta_i  = g_i/g_{i-1}$ for $i\geq 1$. The zeros and poles are always simple by the very definition of the multiplicity pattern $m_i(-)$.  We have:
\begin{enumerate}[(i)]
\item The zeros of $\Delta_i$ are $w_{k}$ where $k=8i+4, \dotsc, 12i - 2, 12i + 2$ is even.
\item The poles of $\Delta_i$ are $w_{k}$ where $k = 6i + 2, \dotsc, 8i-2$ is even.
\end{enumerate}
See Theorem \ref{theorem:actual-truth} below for a consequence of these calculations.
\end{example}

\begin{remark}
The reader interested in seeing more examples can download {\tt sage} code at \cite{Robwebsite}.
\end{remark}

\begin{lemma}\label{lemma:entirety}
$G_\kappa$ is entire for each $\kappa \in \cal W$.
\end{lemma}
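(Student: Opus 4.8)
The plan is to show that the valuations $v_p(g_i^{(\ve)}(w_\kappa))$ grow at least quadratically in $i$, which is more than enough to force $G_\kappa = 1 + \sum g_i^{(\ve)}(w_\kappa) t^i$ to be an entire function of $t$. Recall that $G_\kappa$ is entire if and only if $v_p(g_i^{(\ve)}(w_\kappa)) \to \infty$, and in fact any superlinear lower bound will do. Since the coefficient $g_i^{(\ve)}(w)$ is not divisible by $p$, if $v_p(w_\kappa) = 0$ (i.e.\ $\kappa$ lies outside the open disc $v_p(w) > 0$) then $v_p(g_i^{(\ve)}(w_\kappa)) = 0$ for all $i$, yet this does \emph{not} break entirety: in that case each factor $(w_\kappa - w_k)$ is a unit and $g_i^{(\ve)}(w_\kappa)$ is a unit, so one must instead observe that entirety in this regime follows formally because... wait, that is wrong. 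Let me reconsider: the correct statement is that $v_p(g_i^{(\ve)}(w_\kappa))$ need not go to infinity in the unit-disc boundary case, so the lemma as stated must be using a different mechanism. In fact the resolution is that for $\kappa$ with $v_p(w_\kappa) = 0$ the ghost series need not be claimed entire — but the statement says "for each $\kappa \in \cal W$", so the real point is the behavior for $v_p(w_\kappa) > 0$ only matters, while for $v_p(w_\kappa) = 0$ one notes $w_\kappa - w_k$ is a unit; however then $G_\kappa$ has all unit coefficients, which is \emph{not} entire in general. So the genuine content is the estimate for $v_p(w_\kappa) > 0$, and I suspect the intended reading is that entirety is automatic from a valuation growth bound; I will therefore focus on proving $v_p(g_i^{(\ve)}(w_\kappa)) \gg i^2$ when $v_p(w_\kappa) > 0$, and handle the boundary case by the separate remark that the relevant $w_k$ accumulate, forcing infinitely many factors to contribute positive valuation. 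Let me restructure.

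\medskip

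The clean approach is as follows. First I would count, for fixed $i$, the number of integers $k \in \cal W_\ve$ with $m_i(k) \geq 1$, i.e.\ with $g_i^{(\ve)}(w_k)$ vanishing, together weighted by multiplicity: this is $\deg_w g_i^{(\ve)} = \sum_{k} m_i(k)$. By the definition of the multiplicity pattern $s(d_k^{\new}-1, d_k)$, each integer $k$ contributes to $g_i^{(\ve)}$ only when $d_k \leq i < d_k + d_k^{\new} - 1$, and the multiplicity $m_i(k) = s_{i - d_k}(d_k^{\new} - 1)$ is at most $\lfloor (d_k^{\new}-1)/2 \rfloor$. Next I would use the standard dimension formulas: $d_k = \dim S_k(\Gamma_0(N))$ grows linearly in $k$ with slope $\tfrac{N}{12}\prod_{\ell \mid N}(1 + \tfrac1\ell)$ (up to bounded error), and $d_k^{\new} = \dim S_k(\Gamma_0(Np))^{p\text{-}\new}$ likewise grows linearly in $k$. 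Combining these, the condition $d_k \leq i$ forces $k \leq C_1 i + C_2$ for constants depending only on $N$; hence the roots $w_k$ appearing in $g_i^{(\ve)}$ satisfy $k = O(i)$, and there are $O(i)$ of them, each of multiplicity $O(i)$, so $\deg_w g_i^{(\ve)} = O(i^2)$ — but more importantly I get a \emph{lower} bound too: the number of $k$ with $d_k \leq i < d_k + d_k^{\new} - 1$ is $\gg i$ (an interval of integers $k$ of length $\gg i$), and for a positive proportion of these the multiplicity $m_i(k) \gg i$, giving $\deg_w g_i^{(\ve)} \gg i^2$.

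\medskip

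Finally, fix $\kappa$ with $v_p(w_\kappa) > 0$, say $v_p(w_\kappa) = r > 0$. For each root $w_k$ we have $v_p(w_\kappa - w_k) = \min(r, v_p(w_k))$ if these differ, and $\geq r$ otherwise; in any case $v_p(w_\kappa - w_k) \geq \min(r, v_p(w_k)) > 0$ whenever $v_p(w_k) > 0$. The key subsidiary fact is that $v_p(w_k) > 0$ precisely when $k \equiv \ve$-type congruence conditions hold making $k$ "close to $2$" $p$-adically; more to the point, among the $\gg i$ relevant integers $k = O(i)$, a positive proportion lie in a fixed residue class forcing $v_p(w_k) \geq c$ for a fixed $c > 0$ (indeed infinitely many $w_k$ lie in any neighborhood of $w_2 = 0$ since $k \to 2$ $p$-adically along the relevant progression). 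Therefore $v_p(g_i^{(\ve)}(w_\kappa)) = \sum_k m_i(k) v_p(w_\kappa - w_k) \geq \min(r,c) \cdot (\text{sum of } m_i(k) \text{ over those } k) \gg i^2$, which tends to infinity; combined with the trivial bound $v_p(g_i^{(\ve)}(w_\kappa)) \geq 0$ from $g_i^{(\ve)} \in \Z_p[w]$, this gives $v_p(g_i^{(\ve)}(w_\kappa)) \to \infty$ and hence $G_\kappa$ is entire. For the boundary case $v_p(w_\kappa) = 0$: here $w_\kappa - w_k$ is still a unit, so the above fails — but in this case $\kappa$ does not lie in the open disc, and one checks directly that the product formula, reduced mod $p$, still forces growth, OR the statement is intended only for $v_p(w_\kappa) > 0$; I would confirm with the authors which convention is in force. \textbf{The main obstacle} I anticipate is making the "$\gg i^2$" lower bound on $\sum_k m_i(k)\, v_p(w_\kappa - w_k)$ fully rigorous: one must control simultaneously (i) the linear growth of the dimension formulas for $d_k$ and $d_k^{\new}$ with explicit error terms, (ii) the location of the integers $k$ contributing to $g_i^{(\ve)}$, and (iii) the $p$-adic valuations $v_p(w_k)$ for $k$ in that range, ensuring a positive density of them are bounded away from $0$. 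Steps (i)–(ii) are routine bookkeeping with classical genus formulas; step (iii) is where care is needed, and it is the heart of the proof.
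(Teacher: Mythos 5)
Your main line of argument is the paper's: establish that $\deg_w g_i^{(\ve)}$ grows quadratically in $i$ via the standard dimension formulas for $d_k$ and $d_k^{\new}$, then convert degree into valuation using the location of the roots. (The paper organizes the degree count through the first differences $\lambda(\Delta_i)=\deg g_i-\deg g_{i-1}\to\infty$ and invokes the notion of a series entire over $\Z_p[[w]]$ from Coleman--Mazur before specializing weight-by-weight; you argue directly at each $\kappa$. Either is fine.) However, the two points you flag as problematic or leave unresolved are in fact non-issues, and your handling of them needs to be cleaned up before the write-up is correct.

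First, the case $v_p(w_\kappa)=0$ over which you go back and forth does not occur, so there is no convention to ``confirm with the authors.'' Each $\cal W_\ve$ is by construction the \emph{open} unit disc in the coordinate $w_\kappa=\kappa(\gamma)-1$: continuity of $\kappa$ on the pro-$p$ group generated by $\gamma$ forces $\kappa(\gamma)^{p^n}\to 1$ and hence $v_p(\kappa(\gamma)-1)>0$. Every $\kappa\in\cal W$ therefore has $v_p(w_\kappa)>0$.

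Second, the step you single out as ``the heart of the proof'' --- arranging for a positive proportion of the relevant $k$ to satisfy $v_p(w_k)\geq c$ --- is trivial and needs no congruence or density analysis: for \emph{every} integer $k$ one has $w_k=\gamma^k-1$ with $\gamma\in 1+2p\Z_p$, so $v_p(w_k)\geq 1$ (and $v_2(w_k)\geq 3$ for the even $k$ relevant when $p=2$). Consequently every factor satisfies $v_p(w_\kappa-w_k)\geq\min(v_p(w_\kappa),1)>0$ and
\begin{equation*}
v_p\bigl(g_i^{(\ve)}(w_\kappa)\bigr)\;\geq\;\min\bigl(v_p(w_\kappa),1\bigr)\cdot\deg_w g_i^{(\ve)}\;\gg\;i^2
\end{equation*}
at once. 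The genuine content of the lemma is entirely the degree count, which you carry out correctly (modulo the harmless off-by-one: the vanishing condition is $d_k<i<d_k+d_k^{\new}$).
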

\begin{proof}
Fix $\cal W_{\varepsilon}$, $G = G^{(\varepsilon)}$ and $g_i = g_i^{(\varepsilon)}$. As above, write $\Delta_i = g_i/g_{i-1}$. Define $\lambda(-)$ to be the number of zeros of  minus the number of poles. We claim that
\begin{equation}\label{eqn:lim-inf}
{\liminf_i} \lambda(\Delta_i)  = +\infty.
\end{equation}
From \eqref{eqn:lim-inf} it follows that $\lambda(g_i)/i \goto +\infty$ as $i\goto \infty$. Since the roots of $g_i$ are at $w_k \in p\Z_p$, we deduce that $G$ is entire over $\Z_p[[w]]$ in the sense of \cite[Section 1.3]{ColemanMazur-Eigencurve}. The lemma then follows by specializing weight-by-weight.

To show \eqref{eqn:lim-inf} one counts the zeros and poles of $\Delta_i$ (all of which are simple)  up to $O(1)$-terms.  For instance, if $k \in \cal W_{\varepsilon}$ then $w_k$ is a zero of $\Delta_i$ if and only if $d_k + 1 \leq i \leq d_k + \left\lfloor{d_k^{\new}/ 2} \right\rfloor$. Then, by standard formulas for $d_k$ and $d_k^{\new}$ (\cite[Section 6.1]{Stein-ModularForms}) we estimate the number of zeros of $\Delta_i$ by
\begin{equation*}
{1\over p-1}\left({12i\over \mu_0(N)} - {24i \over \mu_0(N)(p+1)}\right) + O(1) = {12i \over \mu_0(N)(p+1)} + O(1),
\end{equation*}
when $p$ is odd. Here, $\mu_0(N) = [\SL_2(\Z): \Gamma_0(N)]$. The number of poles, for $p$ odd again,  is ${12i \over \mu_0(N)p(p+1)} + O(1)$. When $p=2$ the formulas are modified by replacing $12$ by $6$, but in any case \eqref{eqn:lim-inf} follows.
\end{proof}
By Lemma \ref{lemma:entirety}, $G_\kappa(t) \in \C_p[[t]]$ is entire and so its Newton polygon has an infinite list of slopes each appearing with finite multiplicity.  We note that 
the valuation of $g_i^{(\varepsilon)}(w_\kappa)$ depends only on $\kappa$ and not on our choice of topological generator of $1+2p\Z_p$ and so $\kappa \mapsto \NP(G_\kappa)$ is independent of this choice as well.

\begin{conjecture}[The ghost conjecture]\label{conj:intro-ghost}
If $p$ is an odd $\Gamma_0(N)$-regular prime or $p=2$ and $N = 1$, then $\NP(G_\kappa) = \NP(P_\kappa)$ for each $\kappa \in \cal W$.
\end{conjecture}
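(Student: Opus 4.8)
\noindent\textbf{A strategy toward Conjecture~\ref{conj:intro-ghost}.} We do not prove the ghost conjecture here, but we indicate the approach we expect to succeed: to reduce the global statement to a local one at $p$, and then to prove the local statement by an explicit combinatorial analysis of the $U_p$-operator.

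First, one rephrases $\NP(G_\kappa) = \NP(P_\kappa)$ as a comparison of coefficient valuations. Fixing $\varepsilon$, it is enough to prove that $v_p(a_i^{(\varepsilon)}(w_\kappa)) \geq \NP(G_\kappa)(i)$ for all $i$ and all $\kappa \in \cal W_\varepsilon$ (here $\NP(G_\kappa)(i)$ denotes the ordinate of the Newton polygon above the abscissa $i$), with equality whenever $i$ is a vertex of $\NP(G_\kappa)$. Since $\NP(G_\kappa)$ is assembled from the zeros $w_k$ and the tent-shaped multiplicity pattern $s(\ell,d)$, for a fixed $i$ this is a finite list of assertions, each of one of two kinds: a ``generic weight'' estimate for $w_\kappa$ avoiding the relevant discs, and, for $w_\kappa = w_k$ with $k$ an integer, a statement about the characteristic polynomial of $U_p$ on the classical space $S_k(\Gamma_0(Np))$. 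By semicontinuity of Newton polygons in Coleman's families and density of integer weights in $\cal W_\varepsilon$, it suffices to treat those two cases.

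Second, one works locally at $p$. Using overconvergent (or completed) cohomology one models $P^{(\varepsilon)}(w,t)$ by the characteristic series of $U_p$ on a fixed orthonormalizable Banach module of $p$-adic automorphic forms, and decomposes this module according to residual Galois representations and, at $p$, according to the local type. One is then reduced to a \emph{local ghost conjecture}: for the module built from locally analytic principal series, the characteristic series of $U_p$ has Newton polygon equal to that of an explicitly defined local ghost series depending only on the local type. The hypothesis that $p$ is $\Gamma_0(N)$-regular enters precisely here: it guarantees, via the unit $T_p$-eigenvalues in weights $2 \leq k \leq p+1$, that every local type occurring is a ``generic reducible'' one, for which the local ghost series should take the simplest shape; one must then check that these local ghost series, assembled over the residual constituents, reproduce $G^{(\varepsilon)}$. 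This last step is a combinatorial identity relating $s(d_k^{\mathrm{new}}-1,\,d_k)$ to the local contributions, and it rests on the same dimension formulas for $d_k$ and $d_k^{\mathrm{new}}$ used to prove Lemma~\ref{lemma:entirety}; matching the non-additive tent pattern against the local data is itself a substantial task.

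The proof of the local ghost conjecture is, in our view, the main obstacle. One writes $U_p$ in an explicit power basis $1, z, z^2, \dots$, so that its matrix has entries that are polynomials in $w$ and in the weight, and one must show that the $i \times i$ minors carry exactly the $p$-adic valuations dictated by the local ghost zeros. The two kinds of evidence already assembled suggest the route: near the boundary of weight space the Buzzard--Kilford and Liu--Wan--Xiao halo estimates give the answer outright, and one propagates inward by induction on $i$, using the simple zero/pole structure of $\Delta_i = g_i / g_{i-1}$ exhibited in Example~\ref{example:explicit-p=2-info} together with the Atkin--Lehner functional equation, which forces a symmetry of each $g_i$ and thereby controls its leading term. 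Making this induction unconditional --- in particular \emph{deriving}, rather than assuming, the integrality of classical slopes in the $\Gamma_0(N)$-regular range --- is the hard point; the rest is bookkeeping with dimension formulas and with standard properties of Newton polygons in $p$-adic families.
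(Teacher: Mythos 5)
The statement you are addressing is a \emph{conjecture}; the paper contains no proof of it, only evidence (numerical agreement with Buzzard's algorithm in Fact~\ref{fact:buzzard-agreement}, the special cases collected in Theorem~\ref{theorem:actual-truth}, and consistency with the spectral halo). Your text is likewise a strategy sketch rather than a proof, and you say so. As a strategy it is reasonable and genuinely different in kind from anything in the paper: the paper's construction of $G^{(\varepsilon)}$ is purely global and empirical, whereas you propose to decompose $P^{(\varepsilon)}$ by residual representation and local type at $p$ and to prove a ``local ghost conjecture'' by explicit estimates on minors of the $U_p$-matrix. That is a plausible route to an actual theorem, but every load-bearing step in your outline is itself an unproven assertion, so nothing here can be spliced in as a proof.

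Two gaps deserve to be named concretely. First, your reduction to integer weights via ``semicontinuity and density'' does not work as stated: the function $\kappa \mapsto v_p(a_i^{(\varepsilon)}(w_\kappa))$ is not continuous near the zeros of $a_i^{(\varepsilon)}$, and determining the location \emph{and multiplicities} of those zeros --- i.e.\ matching them against the tent pattern $s(d_k^{\new}-1,d_k)$ --- is precisely the content of the conjecture, not something recoverable from values at a dense set of weights. (For $N>1$ the coefficients $a_i^{(\varepsilon)}$ may even be divisible by $p$, as noted in the paper, so the matching cannot be a literal equality of divisors.) Second, even granting the reduction, the assertion at an integer weight $k$ that the first $d_k$ slopes of $\NP(G_k)$ equal the classical $U_p$-slopes is essentially Buzzard's conjecture, which is open; your induction ``propagating inward from the boundary'' would have to derive the integrality and symmetry of classical slopes rather than assume them, and you acknowledge but do not close this point. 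The role of $\Gamma_0(N)$-regularity in forcing all local types to be ``generic reducible'' is also asserted without argument. In short: this is a coherent research program, not a proof, and it should be presented as such.
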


One can check that the condition that either $p=2$ and $N=1$ or $p$ is an odd $\Gamma_0(N)$-regular prime is a necessary condition for the ghost conjecture to be true. For example, suppose that $p$ is odd and $4 \leq k \leq p+1$ is even. Then the multiplicity of the slope zero on $\NP(G_k)$ is at least $d_k$ nearly by definition. But, if $p$ is not $\Gamma_0(N)$-regular then $d_k$ is strictly larger then the multiplicity of the slope zero on $\NP(P_k)$.

We will eventually formulate a version of Conjecture \ref{conj:intro-ghost} for $p=2$ in Section \ref{sec:complements}. The reason for delaying the discussion is the analogous construction seems to be more complicated for $p=2$ and $N > 1$. The basic idea of a salvage may also be useful in irregular cases.

We will now address the relationship between Conjecture \ref{conj:intro-ghost} and other conjectures about slopes of modular forms (see the survey \cite{BuzzardGee-Slopes} for details on the specific conjectures mentioned below). The next two sections deal with how our conjecture seems to encompass both Buzzard's conjecture (Section \ref{subsec:buzzard}) and the spectral halo conjecture (Section \ref{sec:halos}).  We have omitted, on the other hand, arguments showing that Conjecture \ref{conj:intro-ghost} implies a distributional conjecture of Gouv\^ea on the slopes of $T_p$ \cite{Gouvea-WhereSlopesAre} (compare with \cite[Question 4.10]{Buzzard-SlopeQuestions}) and an asymptotic version of a conjecture of Buzzard \cite[Question 4.9]{Buzzard-SlopeQuestions} and Gouv\^ea \cite{Gouvea-WhereSlopesAre} on the highest $T_p$-slope. This is all in the $\Gamma_0(N)$-regular case of course, except the proofs of these results are intrinsic to the ghost series in the sense that we formulate and prove analogs for the ghost series and would only appeal to the ghost conjecture to make deductions about true slopes. The nature of the proofs is rather orthogonal to the tone of this paper; hence our decision to omit them.  They will be presented elsewhere. (The same comments apply to Theorem \ref{theorem:aps-boundary}  below.) Finally, it is completely obscure from the point of view of the ghost series that the slopes should always be integers in the $\Gamma_0(N)$-regular case. The likely path towards proving this is to directly prove that the ghost conjecture implies Buzzard's conjecture (which {\em is} plausible; see Section \ref{subsec:buzzard} below).

We end by discussing the multiplicities $m_i(-)$. As we indicated, the choice of $k$ for which $g_i(w_k) = 0$ is completely explained by our focus on the slopes of newforms being repeated. There is no such conceptual explanation for the multiplicities $m_i(k)$. Instead, they were discovered by an explicit calculation in the case $p=2$ and $N=1$. Specifically, the authors implemented, on a computer, a formula of Koike to write down an approximation to $P = \sum a_i(w)t^i \in \Z_2[[w,t]]$ up to the $t^{20}$-term (see \cite{Robwebsite,BergdallPollack-FredholmSlopes}). For each $i$,  we expanded the coefficient $a_i(w)$ around the points $w = w_k$ as in \eqref{eqn:gi-zeros}. Then, we were pleasantly astonished to observe that exactly $m_i(k)$-many zeros of $a_i(w)$ were visibly close to $w_k$ (for instance, within $2^{-9}$ and sometimes as close as $2^{-36}$ or $2^{-81}$).\footnote{We learned of this phenomenon, in the case of $a_1(w)$ and the weight $k=14$, from an unpublished note of Buzzard.} The data from these computations can be found at the website \cite{Robwebsite}.

We did not make similar ad-hoc calculations for other primes and levels. Instead, it seems that we ``got lucky'' in that the non-conceptual portion of our construction appears to be insensitive to $p$ and $N$. One should compare this with the analogous portion of Buzzard's algorithm which is discussed in the final paragraph of \cite[Section 3]{Buzzard-SlopeQuestions}. In either case, the point seems to be that there is some basic structural feature to the $p$-adic variation of modular forms which we do not completely understand.

\section{Comparison with known or conjectured lists of slopes}\label{section:comparison}

\subsection{Buzzard's conjecture versus the ghost conjecture}\label{subsec:buzzard}
Buzzard's algorithm exploits many known and conjectural properties of slopes, such as their internal symmetries in classical subspaces, their (conjectural) local constancy in large families, and their interaction with Coleman's $\theta$-operator, to recursively predict classical slopes. The ghost conjecture on the other hand, simply motivated by slopes of $p$-new cuspforms, predicts all overconvergent $U_p$-slopes and one obtains classical slopes by keeping the first $d_k$-many. These two approaches are completely different and, yet, appear to exactly agree. We view such agreement as compelling evidence for both conjectures. 

If $G(t) \in 1 + t\C_p[[t]]$ is a power series and $d \geq 1$, then write $G^{\leq d}$ for the truncation of $G$ in degree at most $d$. Write $\BS(k)$ for the output of Buzzard's algorithm on input $k$.  

\begin{fact}\label{fact:buzzard-agreement}
If either
\begin{enumerate}
\item $N=1$ and $p\leq 4099$ and $2\leq k\leq 2050$, or
\item $2 \leq N \leq 42$,  $3 \leq p \leq 199$ and $2 \leq k \leq 400$,
\end{enumerate}
then the list of slopes of $\NP(G_k^{\leq d_k})$ is equal to $\BS(k)$.
\end{fact}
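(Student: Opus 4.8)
The statement is a finite computational assertion, so my plan is to verify it directly: for each triple $(N,p,k)$ in the stated ranges, compute the two slope multisets $\NP(G_k^{\le d_k})$ and $\BS(k)$ independently and compare them. The key point is that each side reduces to an exact, terminating combinatorial procedure, so the only real issues are the scale of the search and the correctness of the two implementations.

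\emph{Computing $\NP(G_k^{\le d_k})$.} Let $\ve$ be the component of $\cal W$ containing $z\mapsto z^k$ and recall $d_k=\dim S_k(\Gamma_0(N))$. By \eqref{eqn:mi_positive} the coefficient $g_i^{(\ve)}(w_k)$ is nonzero for every $0\le i\le d_k$, so $G_k^{\le d_k}$ is a polynomial of exact degree $d_k$ and $\NP(G_k^{\le d_k})$ has precisely $d_k$ slopes, matching the length of $\BS(k)$. To read these off it suffices to compute the $d_k+1$ valuations $v_p(g_i^{(\ve)}(w_k))$. Since $g_i^{(\ve)}(w)=\prod_{k'}(w-w_{k'})^{m_i(k')}$ is a finite product over integers $k'\in\cal W_\ve$, and since $v_p(w_k-w_{k'})=v_p(\gamma-1)+v_p(k-k')$ for $k'\neq k$ — where $v_p(\gamma-1)=1$ for $p$ odd and $=2$ for $p=2$ — one gets the closed formula
\begin{equation*}
v_p\bigl(g_i^{(\ve)}(w_k)\bigr)=\sum_{k'}m_i(k')\,\bigl(v_p(\gamma-1)+v_p(k-k')\bigr),
\end{equation*}
the sum running over the finitely many (and explicitly bounded) integers $k'\in\cal W_\ve$ with $m_i(k')\neq 0$. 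The exponents $m_i(k')$ come from the sequences $s(d_{k'}^{\new}-1,d_{k'})$, which in turn require only the standard dimension formulas for $d_{k'}$ and $d_{k'}^{\new}=\dim S_{k'}(\Gamma_0(Np))^{p-\new}$ (\cite[Section 6.1]{Stein-ModularForms}). Taking the lower convex hull of $\{(i,v_p(g_i^{(\ve)}(w_k)))\}_{0\le i\le d_k}$ then produces the slope multiset; all of this is exact rational arithmetic, so there is no $p$-adic precision to control.

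\emph{Computing $\BS(k)$.} Here I would implement Buzzard's algorithm exactly as in \cite[Section 3]{Buzzard-SlopeQuestions}: its base cases need only the finite list of classical slopes (equivalently, the valuations of Hecke eigenvalues $v_p(a_p)$ of newforms) in small weights, obtained from a standard modular symbols computation, and the remainder is a purely combinatorial recursion. One then loops over the two ranges in the statement, forms both multisets, and checks equality in each case; the {\tt sage} code carrying this out, together with its output, is available at \cite{Robwebsite}.

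\emph{Where the difficulty lies.} The obstacle is practical rather than conceptual. A proof covering \emph{all} $\Gamma_0(N)$-regular primes would instead have to show that the ghost construction intrinsically reproduces Buzzard's recursion — a separate problem not attempted here. For the finite verification, what remains is bookkeeping at scale (the first range alone spans hundreds of thousands of weight--prime pairs, with Buzzard's recursion and the classical base cases the costly steps) together with confidence in two independent implementations. The natural safeguards against coding errors are cross-checks: against the slope tables in \cite{Buzzard-SlopeQuestions}, against the Buzzard--Kilford formula \cite{BuzzardKilford-2adc} and its analogues, and against direct computations of classical slopes in weights where these are independently known.
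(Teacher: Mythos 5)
Your proposal is correct and is essentially the paper's own ``proof'': the statement is a \emph{Fact} established by direct machine computation (the authors explicitly refer to ``the tests we ran to check Fact \ref{fact:buzzard-agreement}'' and point to the code at \cite{Robwebsite}), and your description of the two exact finite procedures --- evaluating $v_p(g_i^{(\ve)}(w_k))$ via $v_p(w_k-w_{k'})=v_p(\gamma-1)+v_p(k-k')$ and the dimension formulas, versus running Buzzard's recursion --- is precisely that computation. The one substantive observation you add, that $g_i^{(\ve)}(w_k)\neq 0$ for $i\le d_k$ so both lists have length $d_k$, is accurate and consistent with \eqref{eqn:mi_positive}.
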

We note that Buzzard made an extensive numerical verification of his conjecture which included all weights $k\leq 2048$ for $p=2$ and $N=1$.

The careful reader will note a striking omission in the statement of Fact \ref{fact:buzzard-agreement}:\ the agreement between the ghost slopes and the output of Buzzard's algorithm was not limited to $\Gamma_0(N)$-regular cases. Namely, neither the construction of the ghost series nor Buzzard's algorithm requires any {\em a priori} regularity hypotheses and the tests we ran to check Fact \ref{fact:buzzard-agreement} were not limited to regular cases. It seems possible that someone with enough patience could even prove, without any hypothesis on $p$ and $N$, that the output of Buzzard's algorithm agrees with the classical ghost slopes. Although neither conjecture is predicting $U_p$-slopes in the irregular case, the numbers they both output could be thought of as representing the $U_p$-slopes that ``would have occurred'' if not for the existence of a non-ordinary form of low weight.

\subsection{Comparison with computations of actual slopes}
Using computer algebra systems (like {\tt sage} \cite{sagemath}  or {\tt magma} \cite{MagmaCite}) one can compute $U_p$-slopes in various spaces. Buzzard's conjecture is concerned with the (classical) slopes in classical weights, so after observing Fact \ref{fact:buzzard-agreement} we did not perform a large scale computation comparing ghost slopes against true slopes in classical weights. (Though we did spot check that the ghost conjecture is consistent with the examples computed in \cite[Section 4]{Lauder-Computations} and \cite[Section 2.2]{Vonk-SmallPrimes}.) Instead, we made some computations verifying the ghost conjecture in cases where Buzzard's conjecture does not apply.

\begin{example}\label{ex:lauder}
The algorithm introduced by Lauder in \cite{Lauder-Computations} (see \cite{Vonk-SmallPrimes} for small primes) takes in any integer weight $k$ and outputs $P_k(t) \bmod p^{M}$ for a specified integer $M$. One can take $k=0$, for example, and compare the ghost series to the characteristic series $P_0(t)$.  Note that the corresponding slopes are not directly accessible from Buzzard's conjecture (though they can be obtained via a limiting process).

For $N=1$, $k=0$ and $p\leq 23$, we computed $P_0(t) \bmod p^{200}$. For $N=1$, $k=0$ and $29 \leq p\leq 59$ we computed $P_0(t) \bmod p^{100}$. In all of these cases, the corresponding Newton polygon had the same list of slopes as the corresponding ghost series.

Note, however, that the $59$-adic ghost series should ultimately give the wrong list of slopes. So, we further computed $P_0(t) \bmod {59}^{110}$ and saw a disagreement with the ghost series. Going from $59$-adic accuracy $100$ to $110$ is not a lucky guess:\ we have a pretty good idea how to salvage the ghost conjecture in the specific case $p=59$ and $N=1$ (see Section \ref{subsec:irregular}).
\end{example}

\begin{example}
The $U_p$-slopes in classical spaces with character of conductor $p^2$ can be accessed computationally, but do not fall within the purview of Buzzard's conjecture, even by a limiting process.

For $3\leq  p\leq 29$, we computed the $U_p$-slopes in $S_2(\Gamma_1(p^2),\chi)$ where $\chi$ was sampled from among characters of conductor $p^2$, one for each component of $\mathcal W$. The list of slopes we computed agreed with the output of the ghost series.

Furthermore, the ghost conjecture in these cases was checked to be compatible with the starting values of the arithmetic progressions in Coleman's spectral halo (see Section \ref{sec:halos} below) as predicted by combining \cite[Corollary 3.12]{BergdallPollack-FredholmSlopes} with a sufficiently strong extension of \cite{LiuXiaoWan-IntegralEigencurves}. (Specifically, one would need to extend \cite{LiuXiaoWan-IntegralEigencurves} beyond definite quaternion algebras as well as improve the quantitative portion to characters of conductor $p^2$ as opposed to $p^t$ for $t$ sufficiently large.)
\end{example}

\subsection{Comparisons with known theorems on slopes}
There are a number of cases where the list of slopes of $\NP(P_{\kappa})$ have been explicitly determined. In all the cases we know of, we independently verified that the ghost series determines the same list of slopes.  The determination of the $U_p$-slopes in these cases are due to, in order, Buzzard and Calegari \cite{BuzzardCalegari-2adicSlopes}, Buzzard and Kilford \cite{BuzzardKilford-2adc}, Roe \cite{Roe-Slopes}, Kilford \cite{Kilford-5Slopes} and Kilford and McMurdy \cite{KilfordMcMurday-7adicslopes}.  
\begin{theorem}\label{theorem:actual-truth}
$\NP(G_{\kappa}) = \NP(P_{\kappa})$ in the following cases:
\begin{enumerate}
\item 
\label{part:a}
$p=2$, $N=1$, $\kappa=0$,
\item $p=2$, $N=1$, $v_2(w_{\kappa}) < 3$,
\item $p=3$, $N=1$, $v_3(w_{\kappa}) < 1$,
\item $p=5$, $N=1$, $\kappa$ of the form $z^k\chi$ with $\chi$ conductor $25$, and
\item $p=7$, $N=1$, $\kappa \in \mathcal W_0 \cup \mathcal W_2$ of the form $z^k\chi$ with $\chi$ conductor $49$.
\end{enumerate}
\end{theorem}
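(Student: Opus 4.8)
The plan is to verify, case by case, that the list of slopes produced by the ghost series $G_\kappa$ agrees with the independently known list of slopes of $P_\kappa$ in each of the five situations. The unifying strategy is: (1) for each case, extract from the cited literature an explicit description of the list of slopes of $\NP(P_\kappa)$ — for parts (2)--(5) these are infinite arithmetic-progression-type formulas coming from Buzzard--Kilford, Roe, Kilford, and Kilford--McMurdy, while for part (1) it is the explicit $2$-adic list of Buzzard--Calegari in weight $\kappa=0$; (2) compute, directly from the definition of $g_i^{(\varepsilon)}(w)$, the valuations $v_p(g_i^{(\varepsilon)}(w_\kappa))$ for the relevant $\kappa$, using that $v_p(g_i(w_\kappa))$ depends only on the position of $w_\kappa$ relative to the roots $w_k$ and that those roots are governed by the combinatorial pattern $s(d_k^{\new}-1,d_k)$; and (3) read off $\NP(G_\kappa)$ from these valuations and match it against the list from step (1). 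The dimension formulas for $d_k$ and $d_k^{\new}$ from \cite[Section 6.1]{Stein-ModularForms}, already invoked in the proof of Lemma \ref{lemma:entirety}, are the essential input for step (2).

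For part (1), $p=2$, $N=1$, $\kappa=0$, I would work with the explicit expansion of $G(w,t)$ from Example \ref{example:explicit-p=2-info} — in particular the description \eqref{eqn:gi-zeros} of the zeros $g_i(w_k)$ together with the zero/pole description (i)--(ii) of $\Delta_i = g_i/g_{i-1}$ — and evaluate at $w_0 = w_\kappa$ with $\kappa = 0$, i.e. at $w = \gamma^0 - 1 = 0$, so that $v_2(g_i(0))$ becomes $\sum_k m_i(k)\, v_2(w_k)$. Computing these valuations and taking the lower convex hull should reproduce exactly the Buzzard--Calegari list for weight $0$. For parts (2) and (3), the hypotheses $v_p(w_\kappa) < c$ (with $c=3$ for $p=2$, $c=1$ for $p=3$) put $\kappa$ in the region where $v_p(w_\kappa) < v_p(w_k)$ for all relevant $k$ with multiplicity, which makes the valuation computation especially clean: $v_p(g_i(w_\kappa)) = (\#\{k : m_i(k)>0, \text{ counted with multiplicity}\})\cdot v_p(w_\kappa)$ up to the contribution of any roots in the inner disc. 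Here one matches against Buzzard--Kilford's formula that the $i$-th slope is $i\cdot 2^{3-j}$ (suitably interpreted on $\mathcal W$) and Roe's $3$-adic analogue. For parts (4) and (5), $\kappa = z^k\chi$ with $\chi$ of conductor $p^2$, one evaluates at the corresponding $w_\kappa$ and compares with the slope lists of Kilford (for $p=5$) and Kilford--McMurdy (for $p=7$, on the components $\mathcal W_0\cup\mathcal W_2$).

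The main obstacle I anticipate is step (2) in the boundary-of-weight-space cases (parts (2)--(5)): showing that the valuations $v_p(g_i^{(\varepsilon)}(w_\kappa))$, as $i$ ranges over all positive integers, assemble into a Newton polygon whose slopes form precisely the predicted (infinite, eventually periodic or arithmetic-progression-like) list. This requires a careful asymptotic analysis of $\sum_k m_i(k)\,v_p(w_k - w_\kappa)$ as a function of $i$ — essentially an exact, not just $O(1)$, version of the counting argument used in Lemma \ref{lemma:entirety} — together with a verification that the resulting piecewise-linear function is genuinely convex with the right breakpoints. For part (1) the difficulty is instead bookkeeping: assembling the valuations of enough coefficients $g_i(0)$ to pin down the hull and confront it with the Buzzard--Calegari data, but this is a finite (if delicate) computation. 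In all cases the matching is ultimately a comparison of two explicit lists, so the proof is a verification rather than the discovery of a new mechanism; the cited determinations of actual slopes do all the heavy analytic lifting on the $P_\kappa$ side.
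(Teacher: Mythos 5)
Your overall strategy---compute $v_p(g_i^{(\varepsilon)}(w_\kappa))$ from the location and multiplicity of the roots $w_k$, assemble $\NP(G_\kappa)$, and match it against the published determinations of $\NP(P_\kappa)$---is exactly the paper's approach; the paper in fact only writes out parts (a) and (b) and leaves (c)--(e) to the reader. For part (2) your outline is correct and your hedge is unnecessary: every root of every $g_i$ lies in $v_2(w)\geq 3$, so there is no ``contribution of roots in the inner disc,'' and $v_2(g_i(w_\kappa)) = \lambda(g_i)\,v_2(w_\kappa)$ holds exactly. The exact count $\lambda(\Delta_i)=i$ from Example \ref{example:explicit-p=2-info} (not merely the $O(1)$ estimate of Lemma \ref{lemma:entirety}) then gives slopes $i\cdot v_2(w_\kappa)$, matching Buzzard--Kilford. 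Your anticipated ``careful asymptotic analysis'' for the boundary cases thus collapses to an exact root count, which is precisely how the paper handles it.

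The one step that would fail as written is part (1). You describe the comparison with the Buzzard--Calegari data as ``a finite (if delicate) computation,'' but $\NP(G_0)$ and $\NP(P_0)$ are infinite Newton polygons, so no finite computation can establish their equality. What is needed, and what the paper supplies, is a closed-form identity valid for all $i$: using $v_2(w_k - w_{k'}) = 2 + v_2(k-k')$, one rewrites $v_2(\Delta_i(w_k))$ for even $k\le 0$ as the $2$-adic valuation of an explicit ratio of factorials, which is term-for-term the formula appearing in Conjecture 2 of Buzzard--Calegari; their Theorem 1 then proves that conjecture at $k=0$, closing the loop. So the matching must be carried out symbolically for all $i$ simultaneously, not by tabulating finitely many coefficients. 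With that correction, your plan coincides with the paper's proof.
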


Rather than discuss each case, we will verify parts (a) and (b) of the theorem and leave the remainder to the intrepid reader.   For part (\ref{part:a}), we in fact claim that if $k\leq 0$ is even then $\NP(G_k)$ is exactly the Newton polygon appearing in \cite[Conjecture 2]{BuzzardCalegari-2adicSlopes}.  Proving this claim suffices as \cite[Theorem 1]{BuzzardCalegari-2adicSlopes} proves that conjecture when $k=0$.  

To see this claim, we use the details from Example \ref{example:explicit-p=2-info}. Namely, for $k$ even and negative,
\begin{align*}
v_2(\Delta_i(w_k)) 
&= 
v_2 \left({(w_{k} - w_{8i+4}) \dotsb (w_{k}-w_{12 i - 2})(w_{k} - w_{12 i + 2}) \over (w_{k} - w_{6i+2}) \dotsb (w_{k} - w_{8i-2})}\right) \\
&=
2i + v_2\left( {(k-(8i+4))\dotsb (k-(12i-2))(k-(12i+2))\over (k-(6i+2))\dotsb (k-(8i-2))}\right) \\
&= v_2\left(2^{2i}{(-k+12i+2)!(-k+6i)!\over (-k+8i+2)!(-k+8i-2)!(-k + 12i)}\right).
\end{align*}
We used $v_2(w_{k} - w_{k'}) = 2 + v_2(k - k')$ for the second equality.  Thus, the slopes of ghost series exactly match the slopes predicted in \cite[Conjecture 2]{BuzzardCalegari-2adicSlopes} as promised.

For (b), the zeros of each $g_i$ satisfy $v_2(w) \geq 3$. So, if $v_2(w_\kappa) < 3$, then $\NP(G_\kappa)$ is the lower convex hull of $\set{(i,\lambda(g_i)v_2(w_\kappa))}$ where $\lambda(g_i)$ is the number of zeros of $g_i$. From Example \ref{example:explicit-p=2-info}, if $i\geq 1$ then $\lambda(g_i) - \lambda(g_{i-1}) = \lambda(\Delta_i) = i$. It follows that, on $v_2(w_\kappa) < 3$, the slopes of $\NP(G_\kappa)$ are $\set{v_2(w_\kappa), 2v_2(w_\kappa),3v_2(w_\kappa), \dotsc}$. This is also true for $P_\kappa$ by \cite{BuzzardKilford-2adc}.

\section{Halos and arithmetic progressions}\label{sec:halos}

Coleman's spectral halo, mentioned in the introduction, is concerned with $p$-adic weights quite far away from the integers. Specifically, we refer to the spectral halo as the conjecture: 
\begin{conjecture}[The spectral halo conjecture]\label{conj:spectral-halo}
There exists a $v > 0$ such that ${1\over v_p(w_\kappa)}\NP(P_\kappa)$ is independent of $\kappa \in \cal{W}_{\varepsilon}$ if $0 < v_p(w_\kappa) < v$.
\end{conjecture}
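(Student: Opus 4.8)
The plan is to prove the analogous statement for the ghost series $G^{(\varepsilon)}$ directly, and then invoke the ghost conjecture (Conjecture \ref{conj:intro-ghost}) to transport it to $P_\kappa$; in the $\Gamma_0(N)$-regular case (or $p=2$, $N=1$) this yields the spectral halo unconditionally modulo the ghost conjecture, and in general it at least pins down what $v$ and the scaled polygon should be. The key observation is the one already exploited in the verification of Theorem \ref{theorem:actual-truth}(b): every zero $w_k$ of every coefficient $g_i^{(\varepsilon)}(w)$ lies in the region $v_p(w_k) \geq c_p$ for an explicit constant $c_p > 0$ depending only on $p$ (for $p=2$, $N=1$ one has $c_2 = 3$ since the smallest relevant weight is $k=14$ and $v_2(w_k - w_{k'}) = 2 + v_2(k-k')$; for odd $p$ one has $v_p(w_k) \geq 1$, and more refined lower bounds come from the fact that the smallest weight appearing as a zero of $g_1^{(\varepsilon)}$ is bounded below in terms of $p$). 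Set $v = c_p$ (or any positive number not exceeding it).

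First I would show that for $0 < v_p(w_\kappa) < v$, the valuation $v_p(g_i^{(\varepsilon)}(w_\kappa))$ equals $\lambda(g_i^{(\varepsilon)}) \cdot v_p(w_\kappa)$, where $\lambda(g_i^{(\varepsilon)})$ is the number of zeros (with multiplicity) of $g_i^{(\varepsilon)}$ in the open disc. Indeed, writing $g_i^{(\varepsilon)}(w) = \prod_k (w - w_k)^{m_i(k)}$, each factor satisfies $v_p(w_\kappa - w_k) = v_p(w_\kappa)$ because $v_p(w_\kappa) < v \leq v_p(w_k)$, so the valuation is additive and proportional to $v_p(w_\kappa)$. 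Consequently $\NP(G_\kappa)$ is the lower convex hull of the points $(i, \lambda(g_i^{(\varepsilon)}) \cdot v_p(w_\kappa))$, and therefore ${1\over v_p(w_\kappa)}\NP(G_\kappa)$ is the lower convex hull of $(i, \lambda(g_i^{(\varepsilon)}))$ — a polygon that does not depend on $\kappa$ at all. This gives the spectral halo statement for the ghost series on each component $\cal W_\varepsilon$.

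Second, I would invoke the ghost conjecture: under its hypotheses, $\NP(P_\kappa) = \NP(G_\kappa)$ for all $\kappa$, so ${1\over v_p(w_\kappa)}\NP(P_\kappa)$ is independent of $\kappa$ in the range $0 < v_p(w_\kappa) < v$, which is exactly Conjecture \ref{conj:spectral-halo}. To make this cleaner I would first establish a combinatorial lemma computing $\lambda(g_i^{(\varepsilon)})$ in closed form from the dimension formulas for $d_k$ and $d_k^{\new}$ (the same estimates used in the proof of Lemma \ref{lemma:entirety}, but now exact rather than up to $O(1)$), which identifies the limiting scaled polygon explicitly and shows it matches the arithmetic-progression description predicted by Liu--Wan--Xiao.

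The main obstacle is the logical dependence on the ghost conjecture itself: the argument above proves the spectral halo for the ghost series unconditionally and for actual Fredholm series only conditionally. To get an unconditional statement one would instead need to directly compare the Newton polygons of $G^{(\varepsilon)}$ and $P^{(\varepsilon)}$ near the boundary — e.g. by showing that the leading terms (in the $w$-adic sense) of the coefficients $a_i^{(\varepsilon)}(w)$ have the same valuations as those of $g_i^{(\varepsilon)}(w)$, which is precisely the content of the Liu--Wan--Xiao / Andreatta--Iovita--Pilloni circle of ideas and lies outside what the ghost construction alone can deliver. A secondary, more tractable obstacle is pinning down the sharp value of $v$: this requires a lower bound on the valuations of the zeros $w_k$ of $g_1^{(\varepsilon)}$, i.e. a lower bound on the smallest weight $k$ for which $d_k < 1 < d_k + d_k^{\new}$, which is a finite check depending on $p$ and $N$ but must be done carefully to avoid claiming too large a $v$.
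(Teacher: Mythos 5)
This statement is a conjecture in the paper (attributed to Coleman) and the paper offers no proof of it; what you have written is not a proof of the spectral halo but rather (i) the paper's own observation, appearing verbatim in the paragraph following the conjecture, that the ghost series satisfies the halo property because all zeros of the $g_i^{(\varepsilon)}$ lie in $v_p(w)\geq 1$ (resp.\ $v_2(w)\geq 3$), so $v_p(g_i^{(\varepsilon)}(w_\kappa))=\lambda(g_i^{(\varepsilon)})v_p(w_\kappa)$ on the boundary annulus, plus (ii) a reduction of the true statement to the ghost conjecture, which is exactly how the paper frames the relationship. You correctly identify that this yields the halo only for $G_\kappa$ unconditionally and for $P_\kappa$ only conditionally, so your write-up is consistent with the paper but should not be presented as a proof of Conjecture \ref{conj:spectral-halo} itself.
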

See the introductions to \cite{LiuXiaoWan-IntegralEigencurves, BergdallPollack-FredholmSlopes} for further discussion. We note though that the constant value of ${1\over v_p(w_\kappa)}\NP(P_\kappa)$ on $\cal W_{\ve}$ is beautifully realized as the $w$-adic Newton polygon $\NP(\bar P^{(\varepsilon)})$ where $\bar P^{(\varepsilon)}$ is the mod $p$ reduction of  $P^{(\varepsilon)}$. 

It is straightforward to see that the ghost series satisfies this halo-like behavior. Indeed, the zeros of each coefficient $g(w)$ lie in the region $v_p(w_\kappa) \geq 1$ (or $v_2(w_\kappa) \geq 3$ if $p=2$). Thus, over the complement of those regions, we have $v_p(g(w_{\kappa})) = \lambda(g)v_p(w_{\kappa})$ where $\lambda(g) = \deg g$. In particular, $\kappa \mapsto {1\over v_p(w_\kappa)}\NP(G_\kappa)$ is independent of $\kappa \in \cal W_\ve$ if $0 < v_p(w_\kappa) < 1$ (and $0 < v_2(w_{\kappa}) < 3$ if $p=2$), and the constant value is equal to $\NP(\bar{G}^{(\varepsilon)})$.

Even more can be deduced from the location of the zeros of the ghost coefficients.  For $\kappa \in \cal W$, let us define $\alpha_{\kappa} := \sup_{w \in \Z_p} v_p(w_{\kappa} - w)$. Since the zeros of the ghost coefficients are all integers,  if $\kappa,\kappa'$ lie on the same component of weight space and $v_p(w_{\kappa'}-w_{\kappa}) > \alpha_{\kappa}$, then $\NP(G_{\kappa'}) = \NP(G_{\kappa})$.  In particular, for $w_\kappa \nin \Z_p$ we find that there is a small disc around $w_\kappa$ on which the entire list, rather than just a fixed finite list, of ghost slopes is constant. The simplest example is to fix $r \geq 0$ an integer and $v$ a rational number $r < v < r+1$. Then,
\begin{enumerate}
\item $\kappa \mapsto  \NP(G_{\kappa})$ is constant on the open disc $v_p(w_\kappa) = v$, and 
\item the Newton polygons vary linearly with $v$, forming ``halos''. 
\end{enumerate}
We illustrated the halos in Figure \ref{fig:gen-slopes} below where we plotted the first twenty slopes on $v_p(w_{\kappa}) = v$ for $v \nin \Z$ when $p=2$ and $N = 1$. (The omitted regions are indicated with an open circles.\footnote{We stress that the behavior of the slopes in the omitted regions will be complicated, interweaving the disjoint branches that we've drawn.}) The picture over $v_2(w_\kappa) < 3$ illustrates the result of Buzzard--Kilford \cite{BuzzardKilford-2adc}; over $3 < v < 4$  you see pairs of parallel lines hinting at extra structure in the set of slopes, and so on. 

\begin{figure}[htbp]
\begin{center}
\includegraphics[scale=.5]{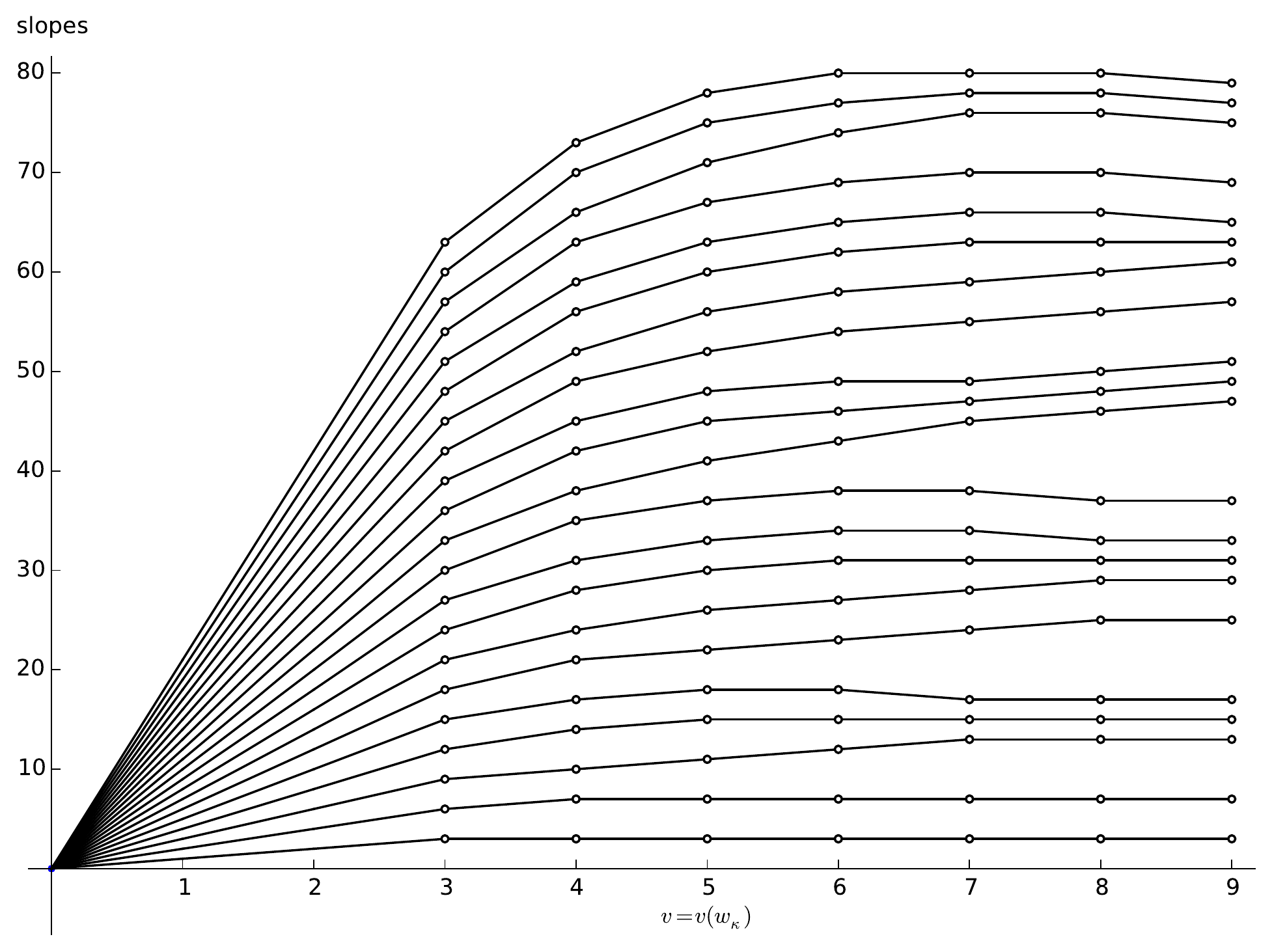}
\caption{``Halos'' centered at $k=0$ for the $2$-adic ghost series of tame level 1.}
\label{fig:gen-slopes}
\end{center}
\end{figure}

Our discussion  also implies that similar pictures may be produced on discs $v_p(w_\kappa - w_{k}) = v$ for a fixed integer $k$ and $v \nin \Z$. Figure \ref{fig:gen-slopes-2} illustrates this for $p=2$ and $N=1$ but centering the halos around $k=62$. In Figure \ref{fig:gen-slopes-2}, the thicker lines indicate higher multiplicities of slopes. For instance, the thickest line is multiplicity $6=\dim S_{62}(\Gamma_0(2))^{2-\new}$ occurring at the slope $30={62-2\over 2}$.

\begin{figure}[htbp]
\begin{center}
\includegraphics[scale=.5]{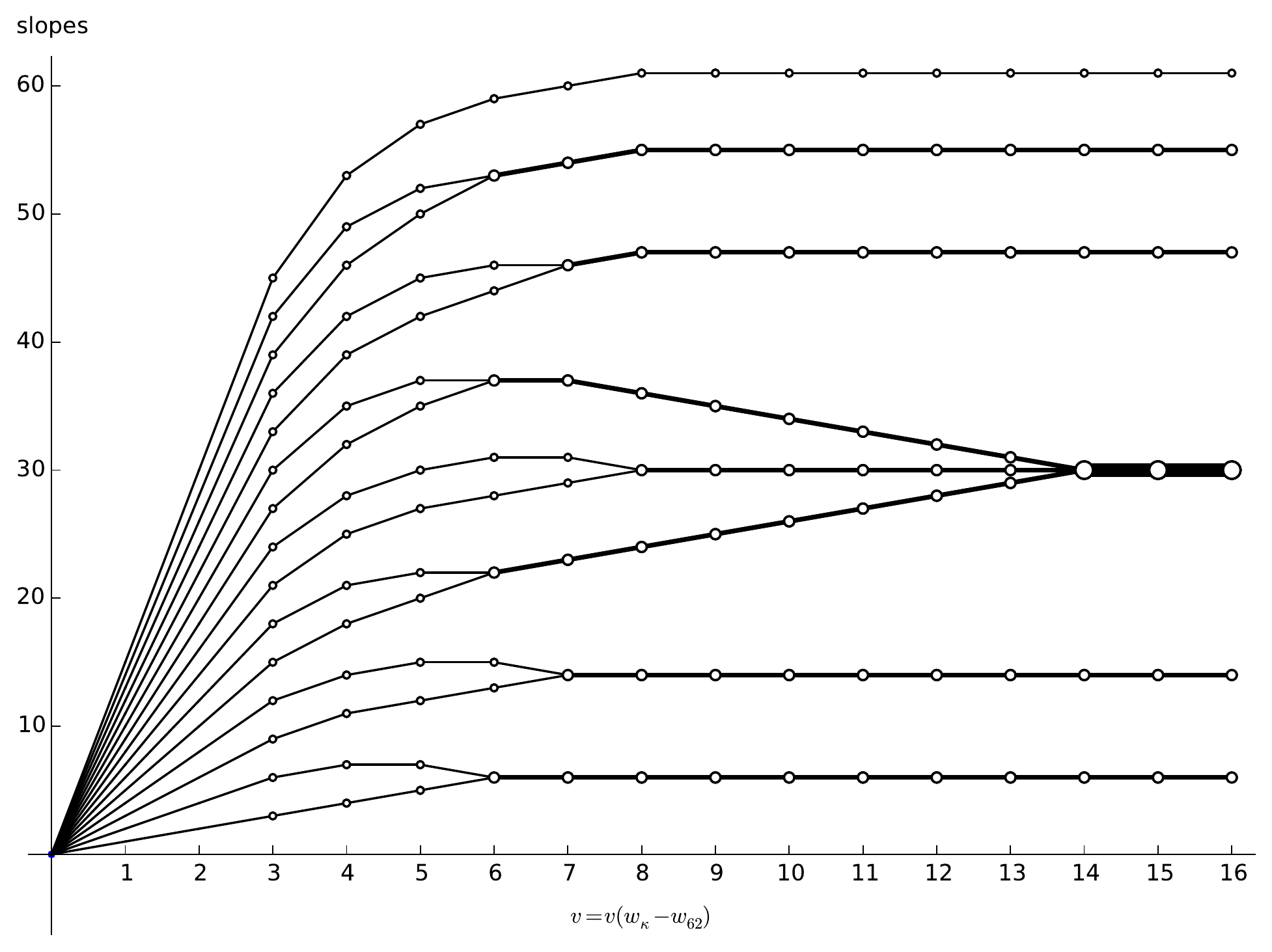}
\caption{``Halos'' centered at $k=62$ for the $2$-adic ghost series of tame level 1.}
\label{fig:gen-slopes-2}
\end{center}
\end{figure}

There is a second part to the spectral halo. Namely, one further expects that the list of slopes of $\NP(\bar P^{(\varepsilon)})$ is a finite union of arithmetic progressions (component-by-component). It is not included in the above statement since it is known to be implied by Conjecture \ref{conj:spectral-halo} (see \cite[Theorem B]{BergdallPollack-FredholmSlopes} and the proofs in \cite{LiuXiaoWan-IntegralEigencurves}). In particular, the ghost conjecture implies the slopes of the mod $p$ ghost series are a finite union of arithmetic progressions as well.

We can directly check (e.g.\ without appealing to modular forms) that ghost slopes form unions of arithmetic progressions. Specifically, we have proven the following theorem.
\begin{theorem}\label{theorem:aps-boundary}
Let $p$ be odd and write $G = G^{(\varepsilon)}$. The slopes of $\NP(\bar G)$ are a finite union of ${p(p-1)(p+1)\mu_0(N)\over 24}$-many arithmetic progressions each of whose common difference is ${(p-1)^2\over 2}$, except for finitely many possible exceptions.
\end{theorem}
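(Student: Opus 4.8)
The plan is to reduce the statement to the combinatorics of one concrete integer sequence and then read off its Newton polygon. Write $\lambda(g_i^{(\varepsilon)}) := \deg_w g_i^{(\varepsilon)} = \sum_{k \in \cal W_{\varepsilon}} m_i(k)$. Each $g_i^{(\varepsilon)}(w)$ is monic in $w$ and, since every zero $w_k$ lies in $p\Z_p$, its reduction mod $p$ is $\bar g_i^{(\varepsilon)}(w) = w^{\lambda(g_i^{(\varepsilon)})}$. Hence $\bar G^{(\varepsilon)}(w,t) = 1 + \sum_{i \geq 1} w^{\lambda(g_i^{(\varepsilon)})} t^i$ and $\NP(\bar G^{(\varepsilon)})$ is exactly the lower convex hull of the points $(i, \lambda(g_i^{(\varepsilon)}))$ for $i \geq 0$ (with $\lambda(g_0^{(\varepsilon)}) = 0$). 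No modular forms enter beyond the dimension formulas already built into the $m_i(k)$.

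Next I would compute the first differences $\lambda(\Delta_i) := \lambda(g_i^{(\varepsilon)}) - \lambda(g_{i-1}^{(\varepsilon)})$. As in the proof of Lemma \ref{lemma:entirety}, the definition of the multiplicity pattern $m_\bullet(-)$ makes $\lambda(\Delta_i)$ the number of (simple) zeros of $\Delta_i$ minus the number of (simple) poles: $w_k$ is a zero when $d_k + 1 \leq i \leq d_k + \lfloor d_k^{\new}/2 \rfloor$ and a pole when $d_k + \lceil d_k^{\new}/2\rceil < i \leq d_k + d_k^{\new}$. The weight $z^k$ lies on $\cal W_{\varepsilon}$ precisely when $k$ runs over a fixed residue class modulo $p-1$, and along that progression the standard dimension formulas (\cite[Section 6.1]{Stein-ModularForms}) present $d_k$ and $d_k^{\new}$ as quasi-linear in $k$ with leading coefficients $\mu_0(N)/12$ and $\mu_0(N)(p-1)/12$. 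Summing the two indicator conditions over the (finite, for fixed $i$) range of relevant $k$ and carrying along the residue corrections yields, for all $i$ beyond an explicit bound,
\[
\lambda(\Delta_i) = \frac{12(p-1)}{\mu_0(N)\,p\,(p+1)}\, i + (\text{a function of } i \bmod M),
\qquad M := \frac{p(p-1)(p+1)\mu_0(N)}{24};
\]
equivalently $\lambda(\Delta_{i+M}) = \lambda(\Delta_i) + \frac{(p-1)^2}{2}$ for $i \gg 0$, using that the displayed leading coefficient times $M$ equals $\frac{(p-1)^2}{2}$. Establishing this — in particular that the period is exactly $M$ rather than a proper multiple of it, which is sensitive to the elliptic-point and cusp corrections in the formulas for $\Gamma_0(N)$ and $\Gamma_0(Np)$, to the parity of $d_k^{\new}$ entering the floor, and to how these periodicities interact with the class of $k$ modulo $p-1$ — is the computational heart of the argument and the step I expect to be the main obstacle; the same analysis pins down how large "$i \gg 0$" must be, hence the size of the exceptional set.

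Granting it, put $D := \frac{(p-1)^2}{2}$; summing gives $\lambda(g_{i+M}^{(\varepsilon)}) - \lambda(g_i^{(\varepsilon)}) = Di + c_0$ for $i \gg 0$ with $c_0$ independent of $i$. Then the affine shear $T(x,y) = (x+M,\, y + Dx + c_0)$ has determinant one, is increasing in $x$, and preserves the vertical order at each fixed $x$; it therefore carries the tail $\{(j,\lambda(g_j^{(\varepsilon)})) : j \geq i_0\}$ bijectively onto $\{(j,\lambda(g_j^{(\varepsilon)})) : j \geq i_0+M\}$ for suitable $i_0$, hence carries the lower hull of the former onto the lower hull of the latter, adding $D$ to the slope of each edge. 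Since these two hulls agree beyond a bounded initial segment, the slopes $s_1 \leq s_2 \leq \cdots$ of $\NP(\bar G^{(\varepsilon)})$ satisfy $s_{i+M} = s_i + D$ for all $i \gg 0$. Thus $\{s_i : i \geq i_1\}$ is a union of $M$ arithmetic progressions each of common difference $\frac{(p-1)^2}{2}$, and the finitely many $s_i$ with $i < i_1$ are the permitted exceptions. The hypothesis that $p$ is odd enters through the constants in the dimension formulas (for $p=2$ one would replace $12$ by $6$) and through the description of the components $\cal W_{\varepsilon}$; the small odd primes, notably $p=3$, are where the correction terms are most delicate, and any genuine anomalies there are absorbed by the exceptional set.
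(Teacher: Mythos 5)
The paper does not actually prove Theorem \ref{theorem:aps-boundary}: the authors state explicitly that its proof is ``purely combinatorial'' and deferred to another paper, so your proposal can only be measured against the architecture the paper hints at (the proof of Lemma \ref{lemma:entirety} and the discussion of $\NP(\bar G)$ in Section \ref{sec:halos}). Against that standard, your framing is right: reducing $\NP(\bar G)$ to the lower convex hull of $(i,\lambda(g_i))$ with $\lambda(g_i)=\deg_w g_i$ is exactly the paper's point of view, your zero/pole count for $\Delta_i$ is the one used in Lemma \ref{lemma:entirety}, your arithmetic check that the leading coefficient $\tfrac{12(p-1)}{\mu_0(N)p(p+1)}$ times $M$ equals $\tfrac{(p-1)^2}{2}$ is correct, and the concluding shear argument (an affine map $(x,y)\mapsto(x+M,y+Dx+c_0)$ carries the tail of the point set to itself, hence shifts the slope multiset by $s_{i+M}=s_i+D$) is a clean and valid way to pass from the recursion on $\lambda(g_i)$ to the union-of-progressions statement.

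The genuine gap is the one you flag yourself: the entire content of the theorem --- that the number of progressions is $\tfrac{p(p-1)(p+1)\mu_0(N)}{24}$ and the common difference is $\tfrac{(p-1)^2}{2}$ --- rests on the claim that $\lambda(\Delta_i)$ is eventually linear plus periodic with period dividing $M$, and you write ``Granting it'' rather than proving it. Everything after that point is correct but routine; everything before it is the theorem. Proving the periodicity requires pinning down exactly which $k\equiv k_0 \pmod{p-1}$ satisfy $d_k< i\le d_k+\lfloor d_k^{\new}/2\rfloor$ (and the analogous pole condition), where the endpoints involve floors of $d_k^{\new}$ and the quasi-period-$12$ corrections in the dimension formulas for $\Gamma_0(N)$ and $\Gamma_0(Np)$; note also that your stated zero/pole ranges and the one quoted in Lemma \ref{lemma:entirety} already disagree by an off-by-one with the definition of $s(d_k^{\new}-1,d_k)$, which is harmless for the $O(1)$ estimates but is precisely the kind of bookkeeping that determines the exact period and the size of the exceptional set. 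One mitigating observation you could add: if the true period turns out to be a proper \emph{divisor} $M'$ of $M$, the conclusion still holds (each progression of difference $DM'/M$ splits into $M/M'$ progressions of difference $D$), so what you really must rule out is only that the period fails to divide $M$. As it stands, the proposal is a correct reduction plus a correct endgame wrapped around an unproven core, so it is not yet a proof.
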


We note that the number of arithmetic progressions predicted in \cite[Therorem 3.11]{BergdallPollack-FredholmSlopes} was a sum of dimensions of various spaces of classical weight 2 forms.  We checked that this sum remarkably simplifies to exactly the expression in Theorem \ref{theorem:aps-boundary} above.


We have also discovered that the previous result extends to include many more $p$-adic weights. 
Specifically, we have proven that (for odd $p$) if $\kappa$ is {\em any} weight with $w_\kappa \nin \Z_p$ then the slopes of $\NP(G_\kappa)$ are a finite union of arithmetic progressions, except for finitely many possible exceptions. The number of progressions and their common difference can be given explicitly. For instance, if $r$ is an integer and $r < v_p(w_\kappa) < r+1$ then the number of progression is $p^r$ times the number of progressions in Theorem \ref{theorem:aps-boundary}. As already discussed, the proof of this is omitted from the present paper as its details are purely combinatorial in nature.

We know of no results in this direction for the true $U_p$-slopes  (beyond results towards Theorem \ref{theorem:aps-boundary}). The situation beyond the boundary of weight space is quite mysterious:\  while over the annulus $0 < v_p(w_\kappa) < 1$ one can see Theorem \ref{theorem:aps-boundary} empirically by computing spaces of cusp forms with a character that has a large $p$-power conductor, there are no classical spaces of cuspforms over more general annuli $r < v_p(w_\kappa) < r+1$.

\section{Complements}\label{sec:complements}

We end by salvaging the ghost conjecture when $p=2$ is $\Gamma_0(N)$-regular and speculating on $\Gamma_0(N)$-irregular cases. Both discussions are motivated by the heuristics in Section \ref{sec:statement}  where we attempted to encode the ``obviously'' repeated newform slopes into the zeros of the coefficients of the ghost series.  
However, there may be more repeated slopes produced by ``pure thought'' in both the $p=2$ case and the irregular case. Specifically, if $U_p$ is acting on a space of cusp forms with a basis over $\Z_p$ then any non-integral slopes have to be repeated.  Below, we will produce fractional $2$-adic slopes in certain spaces with character when $N>1$. When $p$ is not $\Gamma_0(N)$-regular, see \cite{BergdallPollack-FractionalSlopes}.

\subsection{A modified ghost series for $p=2$}
In this subsection, $N$ is an odd positive integer.
\begin{definition}\label{defn:2regular} 
The prime $p=2$ is called $\Gamma_0(N)$-regular if:
\begin{enumerate}
\item The slopes of $T_2$ acting on $S_2(\Gamma_0(N))$ are all zero.
\item The slopes of $T_2$ acting on $S_4(\Gamma_0(N))$ are all either zero or one.
\end{enumerate}
\end{definition}
Hida theory implies that this is equivalent to \cite[Definition 1.3]{Buzzard-SlopeQuestions} and that
\begin{multline}\label{eqn:inequality-ord-dime}
\dim S_2(\Gamma_0(2N))^{\set{0}} \leq \dim S_2(\Gamma_0(N)) + \dim S_{2}(\Gamma_0(2N))^{2-\new}\\
 = \dim S_2(\Gamma_0(2N)) - \dim S_2(\Gamma_0(N))
\end{multline}
with equality if $p=2$ is $\Gamma_0(N)$-regular. Here and below, if $S$ is a space of cusp forms and $X \subset \R$ then $S^X$ is the subspace spanned by eigenforms whose slope lies in $X$.

Write $\eta_8^{\pm}$ for the (unique) Dirichlet character of conductor $8$ and sign $\pm$, and view it as a character modulo $8N$. The character $\eta_8^{\pm}$ is quadratic, so the slopes of $U_2$ acting on $S_k(\Gamma_1(8N),\eta_8^{\pm})$ are symmetric around ${k-1\over 2}$ (via the Atkin--Lehner involution; see \cite[Proposition 3.8]{BergdallPollack-FredholmSlopes}). In particular, Hida theory implies that
\begin{equation}\label{eqn:p=2_hidacons}
\dim S_2(\Gamma_1(8N),\eta_8^+)^{\set{0,1}} = 2\dim S_2(\Gamma_0(2N))^{\set{0}}.
\end{equation}

\begin{proposition}\label{prop:fractional-slopes-p=2}
If $N > 1$ and odd then $\dim S_2(\Gamma_1(8N),\eta_8^+)^{(0,1)} > 0$.
\end{proposition}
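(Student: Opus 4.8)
The plan is to combine the two Hida-theoretic identities \eqref{eqn:inequality-ord-dime} and \eqref{eqn:p=2_hidacons} with the hypothesis that $N > 1$ is odd, and to reduce the claim to the strict inequality in \eqref{eqn:inequality-ord-dime}. By definition, $S_2(\Gamma_1(8N),\eta_8^+)^{(0,1)}$ is the subspace spanned by eigenforms with slope strictly between $0$ and $1$, so it suffices to show that the dimension of $S_2(\Gamma_1(8N),\eta_8^+)$ exceeds the combined dimension of the slope-$\{0,1\}$ part together with the part of slope $\geq 1$ forced to sit at exactly the endpoints. More precisely, I would argue that if $S_2(\Gamma_1(8N),\eta_8^+)^{(0,1)} = 0$ then every slope of $U_2$ on this space is either $0$ or $1$ — but, since this space is self-dual about the center $\tfrac{k-1}{2} = \tfrac12$ via the Atkin--Lehner involution attached to the quadratic character $\eta_8^+$, any eigenform of slope $0$ is paired with one of slope $1$, and conversely. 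Hence vanishing of the open-interval part would force $\dim S_2(\Gamma_1(8N),\eta_8^+) = \dim S_2(\Gamma_1(8N),\eta_8^+)^{\{0,1\}}$, i.e. the whole space is spanned by slope-$0$ and slope-$1$ eigenforms.

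Next I would compute $\dim S_2(\Gamma_1(8N),\eta_8^+)$ by a standard dimension formula for spaces of weight-two cusp forms with a quadratic nebentypus of conductor $8N$, and compare it to $2\dim S_2(\Gamma_0(2N))^{\{0\}}$. By \eqref{eqn:p=2_hidacons} these are equal precisely when the space has no slope in the open interval, so the proposition is equivalent to the assertion that this dimension formula gives something strictly larger than $2\dim S_2(\Gamma_0(2N))^{\{0\}}$. Using \eqref{eqn:inequality-ord-dime} we have
\begin{equation*}
\dim S_2(\Gamma_0(2N))^{\{0\}} \leq \dim S_2(\Gamma_0(2N)) - \dim S_2(\Gamma_0(N)),
\end{equation*}
so it is enough to show
\begin{equation*}
\dim S_2(\Gamma_1(8N),\eta_8^+) > 2\bigl(\dim S_2(\Gamma_0(2N)) - \dim S_2(\Gamma_0(N))\bigr).
\end{equation*}
Both sides are explicit functions of $N$ via the usual genus/cusp contributions (the right-hand side is governed by the $2$-new subspace at level $2N$, the left-hand side by the ramified level $8N$ with quadratic character). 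The hypothesis $N > 1$ is what makes the inequality strict: for $N = 1$ both sides vanish, but as soon as $N$ has a prime factor the level $8N$ is large enough that the oldform/newform bookkeeping at the tame level produces cusp forms unaccounted for on the right.

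The main obstacle I anticipate is purely computational bookkeeping: carrying out the dimension count for $S_2(\Gamma_1(8N),\eta_8^+)$ with the quadratic character of conductor exactly $8$ (as opposed to a character of conductor $8N$), keeping careful track of the elliptic-point and cusp contributions at the various primes dividing $N$, and verifying that the resulting polynomial-in-$N$ inequality is strict for all odd $N > 1$ rather than merely for $N$ large. A cleaner route, which I would try first, is to avoid the full dimension formula entirely: exhibit, for each odd prime $\ell \mid N$, an explicit old-at-$\ell$ contribution to $S_2(\Gamma_1(8N),\eta_8^+)$ coming from $S_2(\Gamma_1(8),\eta_8^+)$ or $S_2(\Gamma_1(8\ell),\eta_8^+)^{\ell\text{-new}}$ that is visibly not captured by the slope-$\{0,1\}$ count predicted by Hida theory at level $2N$, and thereby force a slope into the open interval $(0,1)$ by the Atkin--Lehner symmetry argument above. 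Either way the qualitative content is the same: $N > 1$ introduces extra cusp forms at the ramified $2$-level whose $U_2$-eigenvalues cannot all have integral valuation.
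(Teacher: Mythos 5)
Your proposal is correct and follows essentially the same route as the paper: both use the Atkin--Lehner symmetry about $\tfrac12$ together with \eqref{eqn:p=2_hidacons} and \eqref{eqn:inequality-ord-dime} to reduce the claim to the strict inequality $\dim S_2(\Gamma_1(8N),\eta_8^+) > 2\bigl(\dim S_2(\Gamma_0(2N)) - \dim S_2(\Gamma_0(N))\bigr)$, which is then verified by standard dimension formulas for odd $N>1$. The paper's proof is exactly this, phrased as a direct lower bound on $\dim S_2(\Gamma_1(8N),\eta_8^+)^{(0,1)}$ rather than by contraposition.
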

\begin{proof}
By \eqref{eqn:inequality-ord-dime} and \eqref{eqn:p=2_hidacons}, $\dim S_2(\Gamma_1(8N),\eta_8^+)^{(0,1)}$ is at least
\begin{equation*}
\dim S_2(\Gamma_1(8N),\eta_8^+) - 2\bigl(\dim S_2(\Gamma_0(2N)) - \dim S_2(\Gamma_0(N))\bigr).
\end{equation*}
One checks from standard formulas (\cite[Th\'eor\`eme 1]{CohenOesterle-Dimensions} and \cite[Section 6.1]{Stein-ModularForms} for example) that the final expression is positive for $N > 1$ and odd.
\end{proof}

Since $\eta_8^{\pm}$ is valued in $\set{\pm 1}$, the non-integral slopes in $S_k(\Gamma_1(8N),\eta_8^{\pm})$ are repeated by the general discussion at the start of this section. The ghost series defined thus far {\em does not} see these slopes:\
\begin{example}\label{example:p=2N=3}
$p=2$ is $\Gamma_0(3)$-regular. The space $S_2(\Gamma_1(24), \eta_8^+)$ is two-dimensional with 2-adic slopes $\set{{1\over 2}, {1\over 2}}$. On the other hand, one can check that the original ghost series predicts slopes $\set{0,1}$.
\end{example}

Our salvage of the ghost conjecture for $p=2$ is to  encode the fractional (repeated) slopes appearing in the spaces $S_k(\Gamma_1(8N),\eta_8^{\pm})$ as $k$ varies and $\pm = (-1)^k$ into the zeros of the coefficients of the ghost series.

Specifically, for $k\geq 2$ we write $d_k^{\circ} := \dim S_k(\Gamma_1(8N),\eta_8^{\pm})$. Write $\nu_1^{\circ}(2) \leq \nu_2^{\circ}(2) \leq \dotsb \leq \nu_{d_2^{\circ}}^{\circ}(2)$ for the list of slopes of $U_2$ acting on $S_2(\Gamma_1(8N),\eta_8^+)$; write $\mu_i$ for the multiplicity of $\nu_i^{\circ}(2)$ in this list; and write $\beta_i$ for the smallest index for which $\nu_i^{\circ}(2)$ appears as a slope.
Then set
\begin{equation*}
m_i^{\circ}(2) = \begin{cases}
s_{i}(\mu_i-1,\beta_i-1) & \text{if~}v_i^\circ(2) \not\in \Z; \\
 0 & \text{otherwise,}
\end{cases}
\end{equation*}
where $s_i(\ast,\ast)$ is the up-down pattern from Section \ref{sec:statement}.  Thus, $m_i^{\circ}(2)$ is positive if and only if the $i$-th and $(i+1)$-st slopes in $S_2(\Gamma_1(8N),\eta_8^+)$ are equal and strictly between 0 and 1. Having defined $m^{\circ}_{i}(2)$, define $m^{\circ}_{i}(k)$ for $k > 2$ by 
\begin{equation}\label{eqn:p=2-mults}
m_i^{\circ}(k) = 
\begin{cases}
m_{d_k^{\circ}-i}^{\circ}(2) & \text{if $1 \leq i < d_k^{\circ}$;}\\
0 & \text{otherwise}
\end{cases}
\end{equation}
and
\begin{equation*}
g_i^{\circ}(w) = g_i(w) \cdot \prod_{k=2}^\infty (w - w_{z^k\eta_8^{\pm}})^{m_i^{\circ}(k)} \in \Z_2[w].
\end{equation*}
Note that since $\eta_8^{\pm}(5) = -1$, the zeros of $g_i^{\circ}$ which are not zeros of $g_i$ all satisfy $v_2(w_{z^k\eta_8^{\pm}}) = 1$. Thus these zeros are far away from the zeros of $g_i$ (which lie in $v_2(w) \geq 3$). See below for further commentary.
\begin{definition}
$G^{\circ}(w,t) = 1 + \sum g_i^{\circ}(w)t^i \in \Z_2[[w,t]]$.
\end{definition}
We note that $G^{\circ}(w,t)$ is an entire series over $\Z_2[[w]]$ just as in Lemma \ref{lemma:entirety}. If $N = 1$, then $G = G^{\circ}$. 
\begin{example}
For $N=3$ (continuing Example \ref{example:p=2N=3}), the added multiplicities are mostly zero. The extra non-zero multiplicities are $m_1^{\circ}(2) = m_5^{\circ}(3) = m_9^{\circ}(4)=\dotsb=1$.  It is worth comparing the original boundary Newton polygon $\NP(\bar G)$ and the modified one $\NP(\bar G^{\circ})$ in this case. The slopes of $\NP(\bar G)$ are  $0,1,1,1,1,2,2,2,2,3,\dotsc$ whereas the slopes of $\NP(\bar G^{\circ})$ are ${1\over 2},{1\over 2},1,1,{3\over 2},{3\over 2},2,2,{5\over 2},{5\over 2},\dotsc$. This latter list matches numerical computations of the true boundary slopes (i.e.\ the slopes of $\NP(\bar P)$). Combining the results of \cite{BuzzardKilford-2adc} and \cite{LiuXiaoWan-IntegralEigencurves}, one could even prove $\NP(\bar P) = \NP(\bar G^{\circ})$ with only a finite computation. (This would give another part to add to Theorem \ref{theorem:actual-truth}.)
\end{example}

The ghost conjecture for $p=2$ is:\
\begin{conjecture}\label{conj:p=2}
If $p=2$ is $\Gamma_0(N)$-regular then $\NP(G_\kappa^{\circ}) = \NP(P_\kappa)$ for each $\kappa \in \cal W$.
\end{conjecture}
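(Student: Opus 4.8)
The plan is to establish, component by component on $\cal W = \bigcup_\ve \cal W_\ve$, that the modified ghost series $G^{\circ,(\ve)}$ is a \emph{faithful model} for the Fredholm series $P^{(\ve)}$, meaning that their Newton polygons agree after every specialization $w \mapsto w_\kappa$. Since $G^\circ = G$ and $P^{(\ve)}$ is the tame level $1$ series when $N=1$, the genuinely new content is for odd $N > 1$, where $G^\circ$ differs from the unmodified $G$ only by a cluster of extra \emph{simple} zeros on the region $v_2(w) = 1$, arising from the weights $z^k\eta_8^{\pm}$, together with the up--down multiplicity pattern $m_i^{\circ}(k)$ attached to them. So the heart of the matter is to prove that these extra data reproduce a real feature of $P^{(\ve)}$: the repetition of those fractional $2$-adic slopes whose existence is forced, for $N > 1$, by Proposition \ref{prop:fractional-slopes-p=2}.

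I would first attack the boundary annulus $0 < v_2(w_\kappa) < 1$. There every zero of $G$ lies in $v_2(w) \geq 3$ and hence contributes only through its degree, while the new zeros of $G^\circ$ sit exactly on $v_2(w) = 1$; so $\NP(G_\kappa^\circ)$ on this annulus is controlled by the mod-$2$ boundary polygon $\NP(\bar G^{\circ,(\ve)})$ and, as in Theorem \ref{theorem:aps-boundary}, its slopes form a union of arithmetic progressions. The target is $\NP(\bar P^{(\ve)}) = \NP(\bar G^{\circ,(\ve)})$. The needed input is an extension of Liu--Xiao--Wan \cite{LiuXiaoWan-IntegralEigencurves} from definite quaternion algebras to classical modular forms of tame level, presenting $\NP(\bar P^{(\ve)})$ as a union of arithmetic progressions whose initial terms are the $U_2$-slopes in the weight-$2$ spaces $S_2(\Gamma_1(8N),\eta_8^{\pm})$; one then checks those initial terms coincide with the ones read off from $G^{\circ,(\ve)}$, exactly the consistency already observed numerically in the paper (and, for $N = 3$, reducible to Buzzard--Kilford \cite{BuzzardKilford-2adc} together with \cite{LiuXiaoWan-IntegralEigencurves} by a finite computation).

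Next I would handle classical integer weights $k \geq 2$, where Coleman's classicality theorem identifies the lowest $d_k$ overconvergent slopes with the classical $U_2$-slopes on $S_k(\Gamma_0(2N))$. Because the new zeros of $g_i^\circ$ occur at the weights $z^k\eta_8^{\pm}$, which are not even integer weights in $\cal W_\ve$, the first $d_k$ slopes of $\NP(G_k^\circ)$ should coincide with those of the unmodified ghost polygon and hence, via an extension of Fact \ref{fact:buzzard-agreement}, with the output of Buzzard's algorithm. The higher overconvergent slopes in integer weight, and more generally $\NP(G_\kappa^\circ)$ for $w_\kappa \in 2\Z_2$ away from the region $v_2(w) = 1$, I would pin down by interpolating between integer weights and the boundary: both $\kappa \mapsto \NP(P_\kappa)$ and $\kappa \mapsto \NP(G_\kappa^\circ)$ on a fixed component depend only on the position of $\kappa$ relative to the finitely many relevant special weights, and the needed lower bounds on slope multiplicities come from $p$-new forms (for the zeros of $g_i$) and from the Atkin--Lehner symmetry of $S_k(\Gamma_1(8N),\eta_8^{\pm})$ around $(k-1)/2$ (for the new zeros) --- precisely the symmetry encoded in the definition of $m_i^\circ(k)$ for $k > 2$.

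The main obstacle is that, as the authors emphasize, no current technique proves that the Newton polygon of a $p$-adic Fredholm series equals that of an explicit combinatorial model; this is open already for the unmodified ghost conjecture with $p$ odd. Granting such a technique, the specific difficulty of $G^\circ$ is that $m_i^\circ(k)$ for $k > 2$ is \emph{defined} by transporting, via Atkin--Lehner, the slope data of the single space $S_2(\Gamma_1(8N),\eta_8^{+})$; to prove this transport valid one must show that the fractional-slope multiplicity structure is rigid as $\kappa$ moves $2$-adically through and between the weights $z^k\eta_8^{\pm}$, which appears to demand Koike-type formulas for the coefficients of $P^{(\ve)}$ with uniform control near $v_2(w) = 1$ --- exactly the kind of delicate $2$-adic analytic input that is hardest to obtain. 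For this reason the realistic first goal is the boundary case $0 < v_2(w_\kappa) < 1$, where $\NP(\bar P) = \NP(\bar G^\circ)$ seems within reach once Liu--Xiao--Wan is available beyond definite quaternion algebras.
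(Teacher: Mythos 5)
The statement you are trying to prove is Conjecture~\ref{conj:p=2}: it is an open conjecture, and the paper offers no proof of it, only numerical evidence (Fact~\ref{fact:p=2_evidence}, the $P_0(t)\bmod 2^{100}$ computations, and the boundary comparison with \cite{BuzzardKilford-2adc} and \cite{LiuXiaoWan-IntegralEigencurves} for $N=3$). Your text is likewise not a proof but a conditional research outline: every load-bearing step rests on an input that does not exist in the literature --- an extension of \cite{LiuXiaoWan-IntegralEigencurves} beyond definite quaternion algebras with quantitative control at conductor $8N$, an ``extension of Fact~\ref{fact:buzzard-agreement}'' (which is itself only a numerical observation, not a theorem), and, as you concede in your final paragraph, a general technique for identifying the Newton polygon of a Fredholm series with that of a combinatorial model. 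Granting the conclusion of a conjecture modulo several harder open problems is not a proof, and you should present it as a strategy rather than as an argument.

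Beyond that structural point, one concrete step would fail as written. You claim that because the new zeros of $g_i^{\circ}$ sit at the weights $w_{z^k\eta_8^{\pm}}$, which are not integer weights, ``the first $d_k$ slopes of $\NP(G_k^{\circ})$ should coincide with those of the unmodified ghost polygon.'' But for every even integer $k$ one has $v_2\bigl(w_k - w_{z^{k'}\eta_8^{\pm}}\bigr) = 1$ (since $v_2(w_k)\geq 3$ while $v_2(w_{z^{k'}\eta_8^{\pm}})=1$), so each added zero raises $v_2(g_i^{\circ}(w_k))$ by exactly $1$ relative to $v_2(g_i(w_k))$, and the number of added zeros varies with $i$. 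The points of the modified Newton polygon therefore genuinely move at integer weights, and whether the lower convex hull up to abscissa $d_k$ is unchanged requires an argument about how $\sum_{k'} m_i^{\circ}(k')$ grows in $i$; it is precisely because this is not automatic that the paper re-verifies agreement with Buzzard's algorithm for the \emph{modified} series in Fact~\ref{fact:p=2_evidence}, and Remark~\ref{remark:we-miss-buzzard} shows that in irregular cases the modification really does change the classical-range predictions. Similarly, your appeal to ``the spectral halo'' to propagate the weight-$2$ fractional-slope data to all $k$ via Atkin--Lehner is exactly the heuristic the authors give to motivate the definition of $m_i^{\circ}(k)$ in \eqref{eqn:p=2-mults}; it justifies the shape of the conjecture, but it cannot be cited as a step in its proof.
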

Recall we write $\BS(k)$ for the output of Buzzard's algorithm in weight $k$.
\begin{fact}\label{fact:p=2_evidence}
If $N \in \set{3,7,23,31}$ then $\NP((G_k^{\circ})^{\leq d_k}) = \BS(k)$ for all even $k\leq 5000$, or if $N \in \set{47, 71, 103, 127, 151, 167}$ then $\NP((G_k^{\circ})^{\leq d_k}) = \BS(k)$ for all even $k\leq 2050$.
\end{fact}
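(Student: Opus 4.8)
# Proof Proposal for Fact \ref{fact:p=2_evidence}

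The plan is to establish this as a direct computational verification, exactly parallel to how Fact \ref{fact:buzzard-agreement} was checked, since both assertions are finite (albeit large) calculations rather than structural theorems. The core task is: for each prescribed odd $N$, each even weight $k$ in the stated range, compute both sides of the equation $\NP((G_k^{\circ})^{\leq d_k}) = \BS(k)$ and confirm equality.

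First I would compute the right-hand side $\BS(k)$ by implementing Buzzard's algorithm from \cite{Buzzard-SlopeQuestions} for the prime $p = 2$ and tame level $\Gamma_0(N)$; this requires only knowing $\dim S_k(\Gamma_0(N))$ for the relevant weights and running the recursive combinatorial procedure, and is already available in standard packages. Second, I would compute the left-hand side by forming the modified ghost series $G^{\circ}(w,t)$ through the explicit recipe given above: one needs the dimensions $d_k = \dim S_k(\Gamma_0(2))$, $d_k^{\new} = \dim S_k(\Gamma_0(2N))^{2\text{-}\new}$, and $d_k^{\circ} = \dim S_k(\Gamma_1(8N),\eta_8^{\pm})$ (all from Cohen--Oesterl\'e and the formulas in \cite[Section 6.1]{Stein-ModularForms}), together with the slope data of $U_2$ on $S_2(\Gamma_1(8N),\eta_8^+)$ to pin down the multiplicities $\mu_i$, the indices $\beta_i$, and hence $m_i^{\circ}(2)$ and the shifted $m_i^{\circ}(k)$ via \eqref{eqn:p=2-mults}. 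From these one assembles the polynomials $g_i^{\circ}(w) = g_i(w)\cdot\prod_{k\geq 2}(w-w_{z^k\eta_8^{\pm}})^{m_i^{\circ}(k)}$. Third, one specializes $w \mapsto w_k = \gamma^k - 1$ (for the fixed topological generator $\gamma$ of $1 + 8\Z_2$), truncates to degree $d_k$, and reads off the Newton polygon $\NP((G_k^{\circ})^{\leq d_k})$; here one uses $v_2(w_k - w_{k'}) = 2 + v_2(k - k')$ together with $v_2(w_{z^k\eta_8^{\pm}}) = 1$ to evaluate each valuation $v_2(g_i^{\circ}(w_k))$ exactly. Comparing the resulting slope lists with $\BS(k)$ across all $k$ in the range completes the verification.

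The main obstacle is computational scale and correctness of the slope data feeding the multiplicities: the numbers $\nu_i^{\circ}(2)$, $\mu_i$, $\beta_i$ come from an \emph{actual} $U_2$-eigenvalue computation in $S_2(\Gamma_1(8N),\eta_8^+)$, and by Proposition \ref{prop:fractional-slopes-p=2} these genuinely involve fractional slopes for $N > 1$, so one must compute $2$-adic valuations of eigenvalues to enough precision to determine when $\nu_i^{\circ}(2) \notin \Z$ and to order them correctly. Once that seed data is correct, everything downstream is an exact calculation over $\Q$ with the valuation formulas above, and the only risk is arithmetic bookkeeping as $k$ grows to $5000$ (resp.\ $2050$). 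We note also that, as with Fact \ref{fact:buzzard-agreement}, the agreement observed here is empirical evidence and not a proof of either conjecture; a genuine theorem would require showing that the modified ghost construction reproduces Buzzard's recursion for all $k$, which we do not attempt. The data supporting this fact is available at \cite{Robwebsite}.
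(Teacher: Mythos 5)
Your proposal is correct and matches the paper's approach exactly: this Fact is a reported numerical verification, and the paper offers no argument beyond the computation you describe (indeed it explicitly identifies the $U_2$-action on $S_2(\Gamma_1(8N),\eta_8^+)$ as the bottleneck, just as you do). One minor slip: you write $d_k = \dim S_k(\Gamma_0(2))$, but the paper's definition is $d_k = \dim S_k(\Gamma_0(N))$; the rest of your recipe for assembling $g_i^{\circ}$ and evaluating the valuations is faithful to the construction.
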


The $N$ listed in Fact \ref{fact:p=2_evidence} are all the levels up to $167$ such that $p=2$ is $\Gamma_0(N)$-regular.\footnote{The next such $N$ is $191$. Given this data, it is natural to ask whether $2$ being $\Gamma_0(N)$-regular implies that $N$ is either 1, 3 or  a prime congruent to $7 \bmod 8$. Anna Medvedovsky tells us that $p=2$ is not $\Gamma_0(\ell)$-regular when $\ell > 3$ is a prime $3\bmod 8$.} The bottleneck for further testing is that one needs to compute the $U_2$-action on the space $S_2(\Gamma_1(8N),\eta_8^+)$. 

We also carried out an analog of Example \ref{ex:lauder}. Specifically for the regular levels $N\leq 50$ we checked that the modified ghost slopes matched the slopes computed by $P_0(t) \bmod p^{100}$.

\begin{remark}\label{remark:we-miss-buzzard}
Contrary to Fact \ref{fact:buzzard-agreement}, the numerical agreement between the ghost conjecture and Buzzard's conjecture is actually limited to the $\Gamma_0(N)$-regular case. We do not understand why. 

For instance, $p=2$ is not $\Gamma_0(5)$-regular. Buzzard's algorithm predicts in $S_8(\Gamma_0(5))$ that the $T_2$-slopes are $\set{1,1,2}$. The modified ghost series predicts that the $T_2$-slopes are $\set{1,{3\over 2},{3\over 2}}$. It turns out that this is actually the list of $T_2$-slopes in $S_8(\Gamma_0(5))$ dubiously suggesting our method is more correct. However, both Buzzard's algorithm and our modified series predict slopes $\set{1,2,2}$ for the action of $T_2$ on $S_{10}(\Gamma_0(5))$.  The real slopes are $\set{1,2,3}$.  We are less correct in this case because $G^{\circ}$ predicts  the $U_2$-slopes in $S_{10}(\Gamma_0(5))$ as being $1, 2, 2, {9\over 2}, {9\over 2},\dotsc$, getting $2$-new slopes incorrect.
\end{remark}

Because of Remark \ref{remark:we-miss-buzzard}, let us attempt to justify our definition using the spectral halo. Here, we will take it to mean that $k\mapsto \NP(P_{z^k\eta_8^{\pm}})$ is independent of $k$ (this is a  consequence of a quantitative version of Conjecture \ref{conj:spectral-halo}). Thus, a non-redundant attempt to predict the fractional slopes in $S_k(\Gamma_1(8N),\eta_8^{\pm})$ should focus on those  slopes strictly between $k-2$ and $k-1$. For $k=2$, this is exactly what $m_i^{\circ}(2)$ does. For larger $k$, write $\nu_1^{\circ}(k) \leq \nu_2^{\circ}(k) \leq  \dotsb$ for the slopes of $U_2$ acting on $S_k(\Gamma_1(8N),\eta_8^{\pm})$. Then,
\begin{multline}\label{eqn:constraint-i}
\nu_i^{\circ}(k) = \nu_{i+1}^{\circ}(k) \text{ is in $(k-2,k-1)$}\\ \iff \nu_{d_k^{\circ}-i}^{\circ}(k) = \nu_{d_k^{\circ}-i+1}^{\circ}(k)  \text{ is in $(0,1)$}
\end{multline}
(by the Atkin--Lehner involution). By ``the spectral halo'', the right-hand side of \eqref{eqn:constraint-i} is equivalent to $\nu_{d_k^{\circ}-i}^{\circ}(2) = \nu_{d_k^{\circ}-i+1}^{\circ}(2)$ is in $(0,1)$, except the index $j=d_k^{\circ} - i$ is not always a weight two classical index $j < d_2^{\circ}$. We claim, however, that it should be provided the left-hand side of \eqref{eqn:constraint-i} holds. Indeed, every form in $S_{k-1}(\Gamma_1(8N),\eta_8^{\pm})$ and the $c_0(N)$-many non-classical $\theta^{k-2}$-critical Eisenstein series will produce slopes at most $k-2$ in weight $z^{k-1}\eta_8^{\pm}$. These slopes will also appear in weight $z^k\eta_8^{\pm}$ by ``the spectral halo''. So, if $\nu_i^{\circ}(k)$ is larger than $k-2$, then we should expect that $d_{k-1}^{\circ} + c_0(N) < i$. It turns out that this is equivalent to $d_k^{\circ}-i < d_2^{\circ}$ by a dimension count similar to the one alluded to in Proposition \ref{prop:fractional-slopes-p=2}.

\subsection{Irregular cases}\label{subsec:irregular}
We mentioned above that if $p$ is not $\Gamma_0(N)$-regular, there always exist non-integral slopes in classical spaces $S_k(\Gamma_0(N))$. The key to salvaging the ghost conjecture in the $\Gamma_0(N)$-irregular case then is being able to predict with {\em only a finite computation} 
these repeated slopes.

We examined carefully the case where $p=59$ and $N=1$ and produced a modification of the ghost series which entails adding an extra zero to infinitely many of its coefficients.
Our modified ghost series correctly predicted the $T_{59}$-slopes for weights $2 \leq k \leq 1640$. This weight range includes data significantly beyond what was computed by Gouv\^ea in \cite{Gouvea-WhereSlopesAre} (see \cite{Robwebsite} for the extended data). We note though that the focus of our modification is not on the data. Rather, we observed systematic fractional slopes appearing in spaces of cuspforms with nebentype of conductor $59$, and these observations dictated the placement of the additional zeros (compare with \cite{BergdallPollack-FractionalSlopes}).  In particular, we are not simply artificially data fitting.

However, we do not make a conjecture here for two reasons. First, computing actual slopes is computationally difficult, and we feel that there is still not enough data to support making a conjecture. Note that we cannot compare to Buzzard's algorithm in irregular cases. Second, our modification seems to have relied on a custom calculation followed by a series of coincidences that we cannot explain in much larger generality.

\bibliography{ghost_bib}
\bibliographystyle{abbrv}

\end{document}